\newtheorem{theorem}{Theorem}[section]
\newtheorem{proposition}[theorem]{Proposition}
\newtheorem{lemma}[theorem]{Lemma}
\theoremstyle{definition}
\newtheorem{remark}[theorem]{Remark}
\newcommand{\mcg}{\mathcal{M}}
\newcommand{\artin}{\mathcal{A}}
\newcommand{\cox}{\mathcal{C}}
\newcommand{\cgroup}{\mathcal{G}}
\DeclareMathOperator{\specli}{SL}
\title{Curves intersecting in a circuit pattern}
\author{Levi Ryffel}
\begin{document}
\begin{abstract}
  We show that the cycle relation between
  Dehn twists about curves in a circuit
  detects whether the circuit bounds an embedded disc.
  This is done by determining the isomorphism type
  of the group generated by said Dehn twists
  for various surfaces.
\end{abstract}

\maketitle

\section{Introduction}
A very well-known fact in the theory of mapping class groups
is that relations between Dehn twists about pairs of curves
detect whether the curves intersect zero, one,
or at least two times~\cite[Section~3.5]{primer}.
More precisely, let $\alpha_1$ and $\alpha_2$ be curves in a surface $S$,
and let $T_i$ be the Dehn twist about $\alpha_i$.
Then $\alpha_1$ and $\alpha_2$ are disjoint (up to homotopy) if and only if
the associated Dehn twists satisfy the
\emph{commutation relation} $T_1 T_2 = T_2 T_1$.
Similarly, $\alpha_1$ and $\alpha_2$ intersect precisely once
(again, up to homotopy) if and only if
the associated Dehn twists satisfy the
\emph{braid relation} $T_1 T_2 T_1 = T_2 T_1 T_2$.
In all other cases, $T_1$ and $T_2$ generate
a free subgroup of the mapping class group of $S$.
This article will be on another instance
of the observation
that the presence or absence of certain relations
between Dehn twists
has consequences about the constellation
of the involved curves.

Recent work on so-called bouquets of curves
(families $\alpha_1, \dots, \alpha_n$ of curves
intersecting, up to isotopy, in one common point)
shows that pairwise braid relations
and the so-called \emph{cycle relation}
$T_i T_j T_k T_i = T_j T_k T_i T_j$
or
$T_i T_k T_j T_i = T_k T_j T_i T_k$
between all triples
$\alpha_i, \alpha_j, \alpha_k$
of distinct curves
is equivalent to the family $\alpha_1, \dots, \alpha_n$
of pairwise non-isotopic curves
forming a bouquet~\cite[Theorem~1]{bouquets}.
Here, we seek to characterize circuits
of curves in a very similar way,
featuring longer cycle relations.

A family $\alpha_1, \dots, \alpha_n$ of $n$ curves
in a surface $S$
is said to form a \emph{circuit}
if each curve $\alpha_i$ intersects $\alpha_{i-1}$ and $\alpha_{i+1}$
precisely once, and is disjoint from all other curves $\alpha_j$,
where indices are taken modulo $n$.
A circuit $\alpha_1, \dots, \alpha_n$
is said to \emph{bound a disc}
if the complement $S \setminus (\alpha_1 \cup \cdots \cup \alpha_n)$
has a connected component homeomorphic to a disc.
We say that a circuit $\alpha_1, \dots, \alpha_n$
\emph{bounds an embedded closed disc} in $S$
if there is an embedding of a closed disc
such that its boundary
gets mapped to points in the
union $\alpha_1 \cup \cdots \cup \alpha_n$.

We write $\mcg(S)$ for the mapping class group
(consisting of isotopy classes of
orientation-preserving homeomorphisms
fixing the boundary)
of an orientable surface $S$.
Circuits of curves have previously been studied
by Labruère, who
showed that if $\alpha_1, \dots \alpha_n$
is a circuit bounding an embedded closed disc $\Delta$
such that when travelling in the counter-clockwise
manner around $\Delta$
the curves appear in the order $\alpha_1, \dots, \alpha_n$,
then the cycle relation
$T_n \cdots T_1 T_n \cdots T_3 = T_{n-1} \cdots T_1 T_n \cdots T_2$
holds~\cite[Proposition~2]{labruere}.
In fact, her result is considerably stronger,
see Proposition~\ref{prop:dn} below.
It is worth emphasizing the fact
that $\Delta$ being embedded is important:
if the boundary of $\Delta$
has self-intersections,
then the cycle relation may not hold.

Another result that will feature in this article
is by Mortada, asserting that
if $S$ is a certain neighbourhood
(denoted below by $M^n$, see Figure~\ref{fig:nbhds}) of a circuit
$\alpha_1, \dots, \alpha_n$,
then the homomorphism $\artin(\widetilde{A}_{n-1}) \to \mcg(S)$,
mapping the standard generator $s_i$ to $T_i$
for all $i$,
is injective~\cite[Theorem~5.5.4]{mortada}.
Here, $\artin(\widetilde A_{n-1})$ is an Artin group,
see Section~\ref{sec:artin}.
We will reprove this result and generalize it
(see Theorem~\ref{thm:regular-nbhd}
and Proposition~\ref{prop:antilde}
below) in order to prove our main result.

\begin{theorem}\label{thm:cycle-relation}
  Let $\alpha_1, \dots, \alpha_n$ be a circuit of $n \geq 3$ curves
  in a surface $S$.
  Then the circuit $\alpha_1, \dots, \alpha_n$ bounds an embedded
  closed disc $\Delta$
  if and only if one of the two cycle relations
  \[
    T_n \cdots T_1 T_n \cdots T_3 = T_{n-1} \cdots T_1 T_n \cdots T_2
  \]
  or
  \[
    T_1 \cdots T_n T_1 \cdots T_{n-2} = T_2 \cdots T_n T_1 \cdots T_{n-1}
  \]
  holds.
  The first relation corresponds to the curves appearing
  in the cyclic order $\alpha_1, \dots, \alpha_n$
  when travelling in the counter-clockwise manner
  around $\Delta$, and the second corresponds
  to the other cyclic order.
\end{theorem}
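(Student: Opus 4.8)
The plan is to prove the two implications separately, treating ``bounds a disc $\Rightarrow$ cycle relation'' as essentially known and concentrating the work on its converse. Throughout, write $\cgroup = \langle T_1, \dots, T_n\rangle \leq \mcg(S)$. The intersection graph of a circuit of $n$ curves is the $n$-cycle, so consecutive Dehn twists satisfy the braid relation while non-consecutive ones commute; these are exactly the defining relations of $\artin(\widetilde{A}_{n-1})$, and hence there is a canonical surjection $\pi_S\colon \artin(\widetilde{A}_{n-1}) \twoheadrightarrow \cgroup$. The entire theorem then reduces to understanding, for the two cyclic orders, precisely when the corresponding cycle relator lies in $\ker \pi_S$.

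For the forward direction, suppose the circuit bounds an embedded closed disc $\Delta$ with the curves appearing counter-clockwise in the order $\alpha_1, \dots, \alpha_n$. Then Proposition~\ref{prop:dn} gives the first cycle relation directly. The other cyclic order yields the second relation upon applying Proposition~\ref{prop:dn} after the reindexing $\alpha_i \mapsto \alpha_{n+1-i}$: one checks that this substitution carries the first relator word-for-word onto the second. This settles ``bounds a disc $\Rightarrow$ cycle relation.''

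For the converse I argue by contrapositive: assuming the circuit bounds no embedded closed disc, I show that neither cycle relator lies in $\ker\pi_S$. Let $N$ be a closed regular neighbourhood of $\alpha_1 \cup \cdots \cup \alpha_n$, so that every $T_i$ is supported in $N$ and $\pi_S$ factors as $\iota_* \circ \pi_N$, where $\pi_N\colon \artin(\widetilde{A}_{n-1}) \to \mcg(N)$ is the analogous map and $\iota_*\colon \mcg(N) \to \mcg(S)$ is induced by inclusion. Since an embedded disc bounded by the circuit could be isotoped into $N$, the circuit bounds none in $N$ either, so Theorem~\ref{thm:regular-nbhd}---fed by the injectivity input of Proposition~\ref{prop:antilde} for the model neighbourhood $M^n$---identifies $\cgroup_N := \pi_N(\artin(\widetilde{A}_{n-1}))$ with the full Artin group, i.e. $\pi_N$ is injective. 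That the cycle relators are themselves non-trivial in $\artin(\widetilde{A}_{n-1})$ I would verify by passing to the Coxeter quotient $\cox(\widetilde{A}_{n-1})$ and comparing reduced-word lengths, or, should the two sides survive equal to that quotient, by applying a faithful linear representation of $\artin(\widetilde{A}_{n-1})$ that separates them; conceptually, the quotient by a cycle relator is the proper quotient identified in Proposition~\ref{prop:dn}.

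The main obstacle, and the step demanding the most care, is the passage from $N$ to $S$. From $\pi_S = \iota_* \circ \pi_N$ we always have $\ker\pi_N \subseteq \ker\pi_S$, but a priori $\iota_*$ could introduce new relations and so force a cycle relation in $\mcg(S)$ that fails in $\mcg(N)$. Ruling this out is exactly where the hypothesis is used: because the circuit bounds no embedded closed disc, no component of $S \setminus N$ is a disc, so $N$ is incompressible in $S$, and the Paris--Rolfsen description of $\ker\iota_*$ (generated by Dehn twists about boundary components of $N$ that become trivial or mutually isotopic in $S$) applies. The remaining task is to check that $\ker\iota_* \cap \cgroup_N$ is trivial, equivalently that $\iota_*$ is injective on $\cgroup_N \cong \artin(\widetilde{A}_{n-1})$; granting this, $\ker\pi_S = \ker\pi_N$ contains neither cycle relator, so the contrapositive---and with it the theorem---follows.
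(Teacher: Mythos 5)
Your overall strategy for the converse (argue contrapositively, show $\cgroup(S)\cong\artin(\widetilde A_{n-1})$, and check that the cycle relators are nontrivial there) is the same as the paper's, but the execution has a genuine gap at precisely the point where the paper works hardest. You claim that ``because the circuit bounds no embedded closed disc, no component of $S\setminus N$ is a disc, so $N$ is incompressible in $S$.'' This is false. A regular neighbourhood $N$ can have a boundary component which is a $2n$-gon, i.e.\ whose associated polygon in $\alpha_1\cup\cdots\cup\alpha_n$ traverses each curve twice; such a component can bound a disc of $S\setminus N$ without the circuit bounding an embedded closed disc, since the corresponding closed disc with boundary on the curves has non-injective boundary. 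This happens for the third boundary component of $N^n$ ($n$ odd) and for both boundary components of $M^n$ ($n$ even). For these components $N$ is compressible, Lemma~\ref{lem:inclusion-homo} makes no assertion, and your appeal to a Paris--Rolfsen-type description of $\ker\iota_*$ does not apply. The case cannot be waved away: gluing a disc along an $n$-gon genuinely changes the group (from $\artin(\widetilde A_{n-1})$ to $\artin(D_n)$, Proposition~\ref{prop:dn}), so one must actually prove that gluing along a $2n$-gon does not. That is the content of Proposition~\ref{prop:antilde}, proved by extending the cross-involution over the new disc and rerunning the Birman--Hilden argument over a punctured disc or sphere; your proposal contains no substitute for it.

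Two smaller points. First, even for the non-disc complementary pieces you leave the injectivity of $\iota_*$ on $\cgroup(N)$ as an assumption (``granting this''); the paper closes this by combining Lemma~\ref{lem:inclusion-homo} and Proposition~\ref{prop:punctured-discs-and-annuli} (the kernel is central) with the fact that $\artin(\widetilde A_{n-1})$ has trivial center. Second, your verification that the cycle relators are nontrivial in $\artin(\widetilde A_{n-1})$ is only sketched; the Coxeter-quotient idea can be made to work (the quotient of $\cox(\widetilde A_{n-1})$ by a cycle relator is the finite group $\cox(D_n)$, while $\cox(\widetilde A_{n-1})$ is infinite), but the paper's argument---the quotient $\artin(D_n)$ has infinite cyclic center whereas $\artin(\widetilde A_{n-1})$ has trivial center, so the quotient is proper---is cleaner, and you should commit to one of the two.
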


The left-to-right direction is being taken care of by
Labruère's result, whereas Mortada's shows part
of the right-to-left direction.
In fact, the main topological insight
allowing us to prove Theorem~\ref{thm:cycle-relation}
is the following positive answer to Conjecture 5.5.5
in his thesis,
which asserts that the homomorphism
$\artin(\widetilde A_{n-1}) \to \mcg(S)$
mapping $s_i$ to $T_i$ is injective
whenever $S$ is a regular neighbourhood of
$\alpha_1 \cup \dots \cup \alpha_n$.
We prove the result by constructing suitable branched
coverings of annuli by regular neighbourhoods
of circuits in order to apply the Birman-Hilden theorem.

\begin{theorem}\label{thm:regular-nbhd}
  Let $S$ be a regular neighbourhood of a circuit
  $\alpha_1, \dots, \alpha_{n}$ of $n \geq 3$ curves.
  Then
  the subgroup of $\mcg(S)$ generated by the Dehn twists~$T_i$ about $\alpha_i$
  is geometrically isomorphic to $\artin(\widetilde A_{n-1})$.
\end{theorem}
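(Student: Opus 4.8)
The plan is to show that the natural homomorphism $\phi\colon \artin(\widetilde A_{n-1}) \to \mcg(S)$ sending the standard generator $s_i$ to the Dehn twist $T_i$ is an isomorphism onto $\langle T_1, \dots, T_n\rangle$; this is precisely what ``geometrically isomorphic'' asks for. Well-definedness is immediate from the intersection data of a circuit: consecutive curves $\alpha_i, \alpha_{i+1}$ meet once, so $T_i T_{i+1} T_i = T_{i+1} T_i T_{i+1}$, while non-consecutive curves are disjoint, so the corresponding twists commute. These are exactly the defining relations of $\artin(\widetilde A_{n-1})$, whose Coxeter graph is the $n$-cycle (indices mod $n$). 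Surjectivity onto $\langle T_i\rangle$ is clear, so the entire content is the injectivity of $\phi$.

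To prove injectivity I would first identify the topology of $S$. The union $\alpha_1 \cup \cdots \cup \alpha_n$ is a $4$-valent graph with $n$ vertices (the intersection points) and $2n$ edges, so $\chi(S) = -n$. On the other hand, a double cover of an annulus $A$ branched over $m$ points has Euler characteristic $2\chi(A) - m = -m$ by Riemann--Hurwitz. This suggests --- and I would prove --- that $S$ is homeomorphic to the double cover of $A$ branched over $n$ points $x_1, \dots, x_n$ placed on the core circle, for a suitably chosen $\mathbb{Z}/2$ cover. Concretely, I would join consecutive branch points $x_i, x_{i+1}$ by short arcs $a_i$ along the core circle (indices mod $n$), so that the $a_i$ tile the core and form a \emph{cycle}: consecutive arcs share an endpoint and non-consecutive arcs are disjoint. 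The preimage $\widetilde a_i$ of each $a_i$ is a single simple closed curve, invariant under the deck involution $\iota$, and I would check that $\widetilde a_1, \dots, \widetilde a_n$ is a circuit whose regular neighbourhood is all of $S$, via a homeomorphism carrying $\alpha_i$ to $\widetilde a_i$. \textbf{This construction is the main obstacle}: one must pin down the correct $\mathbb{Z}/2$ cover (each meridian of $x_i$ must be nontrivial so that branching occurs, and the core class chosen so that $S$ is connected with the standard transverse-crossing ribbon structure at each vertex), and verify that every homeomorphism type of regular neighbourhood of an $n$-circuit arises this way; the parity of $n$ will likely enter when counting boundary components.

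With the cover in hand, I would invoke the Birman--Hilden theorem. The involution $\iota$ is a hyperelliptic-type involution of $S$ with quotient $A$ and branch set $\{x_1, \dots, x_n\}$, and the cover satisfies the Birman--Hilden property, giving an isomorphism between the symmetric mapping class group of $S$ and the mapping class group $B$ of the annulus with the $n$ marked points. Under it, the Dehn twist $T_i$ about the symmetric curve $\widetilde a_i$ is the lift of the half-twist $\sigma_i$ about the arc $a_i$, exactly as in the classical correspondence between the chain relation and the braid group. Hence $\langle T_1, \dots, T_n\rangle$ is isomorphic, via $T_i \mapsto \sigma_i$, to the subgroup $\langle \sigma_1, \dots, \sigma_n\rangle \le B$ generated by the half-twists about the cyclic arcs.

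It remains to identify $\langle \sigma_1, \dots, \sigma_n\rangle$ with $\artin(\widetilde A_{n-1})$ compatibly with $s_i \mapsto \sigma_i$. Here I would use the structure of the annular braid group $B$, which splits as $\artin(\widetilde A_{n-1}) \rtimes \mathbb{Z}$, the $\mathbb{Z}$ being generated by the rotation of $A$ that cyclically permutes the branch points. The normal factor is exactly the kernel of the total-winding homomorphism $B \to \mathbb{Z}$ and is generated by the cyclic half-twists $\sigma_i$, with the natural map $\artin(\widetilde A_{n-1}) \to \langle \sigma_i\rangle$ an isomorphism; this is the algebraic input recorded in Proposition~\ref{prop:antilde}, which I would prove directly from this semidirect-product description. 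Composing, $\phi$ factors as the isomorphism $\artin(\widetilde A_{n-1}) \xrightarrow{\sim} \langle \sigma_i\rangle$ followed by the Birman--Hilden isomorphism $\langle \sigma_i\rangle \xrightarrow{\sim} \langle T_i\rangle$, so $\phi$ is injective and hence a geometric isomorphism, as claimed.
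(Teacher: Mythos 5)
Your proposal follows essentially the same route as the paper: quotient $S$ by a hyperelliptic-type involution to get an annulus with $n$ marked points, apply Birman--Hilden to match the $T_i$ with half-twists $\sigma_i$, and identify $\langle \sigma_1,\dots,\sigma_n\rangle$ inside the annular braid group $\artin(B_n)$ with $\artin(\widetilde A_{n-1})$ via the Kent--Peifer/Charney--Peifer description. Two differences are worth recording. First, you build $S$ upward as a branched double cover of the annulus and correctly flag as the ``main obstacle'' the verification that every regular neighbourhood of a circuit arises this way; the paper goes in the opposite direction, observing that there are exactly two neighbourhood types ($N^n$ and $M^n$, with the odd case reducing to one up to orientation), that each visibly carries the ``cross-involution'', and that the quotient is an annulus with $n$ marked points by an Euler characteristic and boundary count. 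Your monodromy analysis (each meridian nontrivial, the core class giving the two choices, parity of $n$ controlling boundary components) does fill this in, but the paper's downward route avoids having to classify covers at all. Second, you invoke the full strength of Birman--Hilden, namely the isomorphism between the symmetric mapping class group of $S$ and the liftable mapping class group of the marked annulus, to transport injectivity from $\langle\sigma_i\rangle$ back to $\langle T_i\rangle$. The paper needs only the easy direction (a well-defined homomorphism $\cgroup(S)\to\mcg(Z_n)$, $T_i\mapsto\sigma_i$): since the defining relations of $\artin(\widetilde A_{n-1})$ already hold among the $T_i$, the map $s_i\mapsto T_i$ is well defined, its composite with $\varphi$ is the identity on $\artin(\widetilde A_{n-1})$, and everything is forced to be an isomorphism. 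If you complete the cover classification, your argument is correct, but the paper's version is lighter on both counts.
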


A \emph{geometric embedding} of an Artin group into the
mapping class group of a surface is an injective homomorphism
mapping the standard generators to Dehn twists,
and a \emph{geometric isomorphism} is
a bijective geometric embedding
or its inverse.
It is an open question what Artin groups
geometrically embed into the mapping class group of a surface,
although partial answers are plentiful.
For instance,
the group generated by Dehn twists about
two curves intersecting
two or more times is isomorphic to a free group
on two generators.
Hence, the free group of two generators
(which is the Artin group $\artin(\widetilde A_1)$
by convention)
geometrically embeds~\cite[Theorem~3.14]{primer}.
More generally, free groups geometrically
embed~\cite[Theorem~1.1]{humphries-free}.
Perron-Vannier~\cite[Théorème~1]{perron-vannier}
showed that both $\artin(A_n)$ and $\artin(D_n)$
geometrically embed.

On the other hand,
because two Dehn twists can only generate
$\mathbb{Z}$, $\mathbb{Z}^2$,
a quotient of $\artin(A_2)$,
or a free group~\cite[Section~3.5.2]{primer},
the Artin group $\artin(\Gamma)$ does not
geometrically embed if $\Gamma$ contains
a weight different from $2, 3, \infty$.
Labruère showed that $\artin(\widetilde D_{n-1})$
does not geometrically embed~\cite[Theorem]{labruere}, where
$\widetilde D_{n-1}$ is the graph
\[
    \vcenter{\hbox{ \tikz[scale=0.6]{
      \draw (150:1) -- (0, 0) -- (210:1);
      \draw (0, 0) -- (1.5, 0);
      \draw (2.5, 0) -- (4, 0);
      \draw (4, 0) -- +(30:1) coordinate (A);
      \draw (4, 0) -- +(-30:1) coordinate (B);
      \draw[fill=white] (0, 0) circle (0.05);
      \draw[fill=white] (150:1) circle (0.05);
      \draw[fill=white] (210:1) circle (0.05);
      \draw[fill=white] (1, 0) circle (0.05);
      \node at (2, 0) {$\cdots$};
      \draw[fill=white] (3, 0) circle (0.05);
      \draw[fill=white] (4, 0) circle (0.05);
      \draw[fill=white] (A) circle (0.05);
      \draw[fill=white] (B) circle (0.05);
    }}}
\]
with $n$ vertices for $n \geq 5$,
and shortly after, Wajnryb showed that neither do
the exceptional groups
$\artin(E_6)$,
$\artin(E_7)$, $\artin(E_8)$~\cite[Theorem~3]{wajnryb}.

In the same article,
Wajnryb appears to claim that Labruère also showed that
$\artin(\widetilde A_{n-1})$
does not geometrically embed~\cite[Theorem~2]{wajnryb}.
But this is not true
(it contradicts Theorem~\ref{thm:regular-nbhd}).
Possibly, the confusion comes from Labruère considering
not a regular neighbourhood~$S$
of a family of curves intersecting
in a circuit pattern, but rather a surface
$S \cup \Delta$ containing an additional embedded closed disc
(in our notation of Section~\ref{ssec:building} below, Labruère
considers the surface $N^n \cup \Delta^1$).
By Theorem~\ref{thm:cycle-relation},
considering $S \cup \Delta$ instead of $S$
causes the introduction of a cycle relation.

Presumably, Theorem~\ref{thm:cycle-relation} could be proven
without any Artin group theory
by considering the action of Dehn twists
on curves in the surfaces, as the standard cycle relation
$T_n \cdots T_1 T_n \cdots T_3 = T_{n-1} \cdots T_1 T_n \cdots T_2$
is equivalent to the commutation relation
\[
  T_n \cdots T_2 T_3^{-1} \cdots T_n^{-1} \cdot T_1
  = T_1 \cdot T_n \cdots T_2 T_3^{-1} \cdots T_n^{-1}
\]
(see~\cite[Section~1]{baader-loenne} for a justification of this).
It thus suffices to prove that if
for the homeomorphism $h = T_n \cdots T_3$ the curves $h(\alpha_2)$
and $\alpha_1$ are disjoint (up to homotopy),
then the circuit $\alpha_1, \dots, \alpha_n$
bounds an embedded disc.
This idea is illustrated in Figure~\ref{fig:bigon},
where we consider a neighbourhood
(later called $N^4$) of the circuit in the case that
the cycle relation $T_4 T_3 T_2 T_1 T_4 T_3 = T_3 T_2 T_1 T_4 T_3 T_2$
holds.
The strategy of proving
the result pictorially has a few drawbacks, however.
Most notably, is not very convincing
to rely solely on such pictures, as the reasoning
for the presence of discs seems very prone to error,
in particular for some of the surfaces that are less easily
drawn in a flat way.
In the author's opinion, the approach taken in this text
is more insightful and reliable.

\begin{figure}[htb]
  \centering
  \begin{minipage}{0.33\textwidth}
    \centering
    \includegraphics{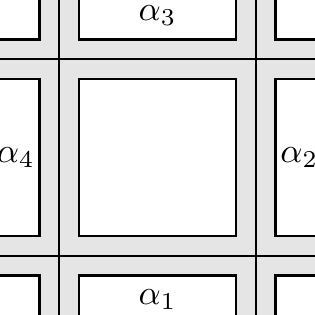}
  \end{minipage}%
  \begin{minipage}{0.33\textwidth}
    \centering
    \includegraphics{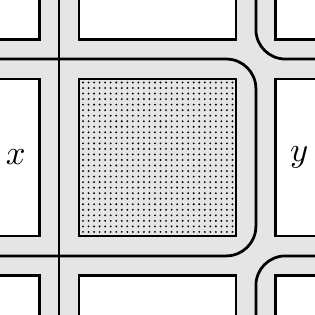}
  \end{minipage}%
  \begin{minipage}{0.33\textwidth}
    \centering
    \includegraphics{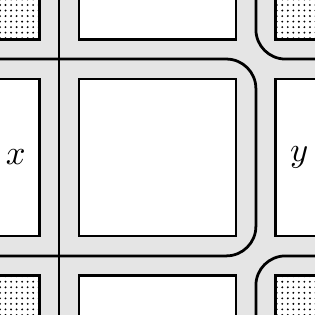}
  \end{minipage}%
  \caption{A neighbourhood of the curves $\alpha_1, \dots, \alpha_4$ and the resulting
    curves $x = \alpha$ and $y = T_3 T_2 (\alpha_1)$
    that must cobound one of the dotted bigons.
  }%
  \label{fig:bigon}
\end{figure}

A brief summary of sections is as follows.
Section~\ref{sec:artin}
is a collection of prerequisites from Artin group theory
and its relation to the theory of mapping class groups.
In Section~\ref{sec:nbhds}
we discuss possible regular neighbourhoods
of circuits and prove Theorem~\ref{thm:regular-nbhd}.
In our path toward a proof of Theorem~\ref{thm:cycle-relation}
we get sidetracked in Section~\ref{sec:discs}
and investigate the effect of
all (possibly non-embedded) discs
on the relational theory
of the group $\cgroup(S)$ generated by
the Dehn twists $T_1, \dots, T_n$ (note that $\cgroup(S)$ depends
not only on $S$ but also on the circuit $\alpha_1, \dots, \alpha_n$).
This allows us to
determine many isomorphism
types of $\cgroup(S)$ for the surfaces $S$ arising
from regular neighbourhoods
by gluing in discs,
see Table~\ref{tab:summary},
which is interesting in its own right.
The reader mainly interested in
Theorem~\ref{thm:cycle-relation}
may wish to focus on
Subsections~\ref{ssec:building},~\ref{ssec:extending},~\ref{ssec:cycles},
and skip the rest of Section~\ref{sec:discs}.
Finally, in Section~\ref{sec:punctured-discs-and-annuli},
we prove Theorem~\ref{thm:cycle-relation}.

\section{Artin groups and mapping class groups}\label{sec:artin}
\subsection{Artin groups}
To an undirected multigraph $\Gamma$
(i.e., a graph
with any number, infinity included, of edges between two vertices,
but no edge between a vertex and itself),
we associate the corresponding \emph{Artin group}
$\artin(\Gamma)$ by describing a presentation.
The generators of $\artin(\Gamma)$ are
the vertices of $\Gamma$,
and the relations are $sts \ldots = tst \ldots$,
where the words on both sides have length $n_{st} + 2$,
where $n_{st}$ is the number of edges between $s$ and~$t$.
The numbers $n_{st} + 2$ are sometimes
referred to as \emph{weights}.
Explicitly, if there is no edge between $s$ and~$t$
then they satisfy the
\emph{commutation relation} $st = ts$ (weight $2$),
if $s$ and~$t$ are joined by a single edge
then $s$ and~$t$ satisfy the \emph{braid relation}
$sts = tst$ (weight $3$),
and so on.
It is customary that $s$ and $t$ satisfying
no such relation is allowed.
This corresponds to infinitely many edges
joining $s$ and $t$ (weight $\infty$).
We call an Artin group $\artin(\Gamma)$ \emph{irreducible}
if $\Gamma$ is a connected graph.
A few example graphs $\Gamma$ are listed in Figure~\ref{fig:dynkin}.

\begin{figure}[htb]
  \centering
  \begin{minipage}{0.18\textwidth}
    \centering
    \includegraphics{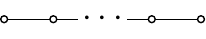}
  \end{minipage}%
  \begin{minipage}{0.21\textwidth}
    \centering
    \includegraphics{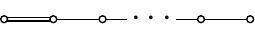}
  \end{minipage}%
  \begin{minipage}{0.20\textwidth}
    \centering
    \includegraphics{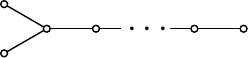}
  \end{minipage}%
  \begin{minipage}{0.21\textwidth}
    \centering
    \includegraphics{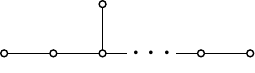}
  \end{minipage}%
  \begin{minipage}{0.18\textwidth}
    \centering
    \includegraphics{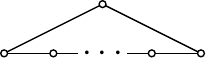}
  \end{minipage}%
  \caption{The graphs $A_{n}$, $B_{n}$, $D_{n}$, $E_{n}$, $\widetilde A_{n-1}$.
  All five graphs have $n$ vertices.}%
  \label{fig:dynkin}
\end{figure}

\subsection{The Birman exact sequence}
For a surface $S$,
we write $C(S, n)$ for the
configuration space of $n$ points in $S$~\cite[Section~9.1.2]{primer}.
In order to relate Artin groups to mapping class groups of
surfaces, a very useful tool is the Birman exact sequence.

\begin{lemma}[%
  {\cite[Theorem~9.1]{primer}}%
  ]%
  \label{lem:birman}
  Let $S$ be a surface without marked points
  such that the identity component
  of the group
  of orientation-preserving homeomorphisms $S \to S$
  keeping the boundary $\partial S$ fixed,
  is simply connected.
  Let $S_n$ be the surface obtained from $S$
  by marking $n$ points in the interior of $S$.
  Then there is an exact sequence
  \[
    1 \longrightarrow
    \pi_1(C(S, n)) \longrightarrow
    \mcg(S_n) \longrightarrow
    \mcg(S) \longrightarrow 1,
  \]
  where the homomorphism $\mcg(S_n) \to \mcg(S)$
  is obtained from forgetting that the marked points
  are marked.
\end{lemma}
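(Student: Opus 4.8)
The plan is to obtain the sequence as a segment of the long exact sequence in homotopy associated to a fibration relating homeomorphism groups to the configuration space. First I would set $G = \homeo^+(S, \partial S)$, the topological group of orientation-preserving homeomorphisms of $S$ fixing $\partial S$ pointwise (with the compact-open topology), and fix a basepoint configuration $B = \{p_1, \dots, p_n\} \in C(S, n)$ given by the marked points of $S_n$. Evaluation on this configuration defines a map
\[
  \mathrm{ev} \colon G \longrightarrow C(S, n), \qquad \phi \longmapsto \phi(B),
\]
whose fiber over $B$ is the stabilizer $G_B = \{\phi \in G : \phi(B) = B\}$, namely the group of boundary-fixing homeomorphisms that preserve the marked set.

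The key technical input is that $\mathrm{ev}$ is a fibration, which I would deduce from the isotopy extension theorem: since $S$ is connected, $G$ acts transitively on $C(S,n)$, and a local section near any configuration is produced by extending a compactly supported isotopy dragging $B$ to the nearby configuration. With the fibration $G_B \hookrightarrow G \xrightarrow{\mathrm{ev}} C(S,n)$ in hand, the long exact sequence of homotopy groups reads
\[
  \cdots \longrightarrow \pi_1(G) \longrightarrow \pi_1(C(S,n)) \longrightarrow \pi_0(G_B) \longrightarrow \pi_0(G) \longrightarrow \pi_0(C(S,n)).
\]
I would then identify the $\pi_0$ terms with mapping class groups: $\pi_0(G) = \mcg(S)$ by definition, while $\pi_0(G_B) = \mcg(S_n)$ because an isotopy class in $G_B$ is exactly a mapping class of $S$ preserving the marked set, and the inclusion-induced map $\pi_0(G_B) \to \pi_0(G)$ is precisely the point-forgetting homomorphism.

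Finally I would feed in the two hypotheses. Connectedness of $S$ makes $C(S,n)$ path-connected, so $\pi_0(C(S,n))$ is trivial and the forgetting map is surjective. The assumption that the identity component of $G$ is simply connected gives $\pi_1(G) = 0$, which by exactness forces the map $\pi_1(C(S,n)) \to \pi_0(G_B)$ to be injective. Exactness at the remaining spots then yields precisely
\[
  1 \longrightarrow \pi_1(C(S,n)) \longrightarrow \mcg(S_n) \longrightarrow \mcg(S) \longrightarrow 1.
\]
I expect the main obstacle to be the verification that $\mathrm{ev}$ is genuinely a fibration (equivalently, the construction of local sections), since this is where the point-set topology of homeomorphism groups and the isotopy extension theorem do the real work; by contrast, the translation of the connecting homomorphism into the concrete point-pushing picture is conceptually clean, though it still deserves care.
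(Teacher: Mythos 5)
The paper offers no proof of this lemma, citing it directly from Farb--Margalit, and your argument is precisely the standard one given in that reference: the evaluation fibration $\homeo^+(S,\partial S) \to C(S,n)$ with fiber the stabilizer of the marked set, its long exact homotopy sequence, the identification of the $\pi_0$ terms with mapping class groups, and the two hypotheses killing $\pi_1$ of the homeomorphism group and $\pi_0$ of the configuration space. Your proposal is correct and matches the intended proof.
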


The Birman exact sequence
is commonly used to prove that mapping class groups
of certain surfaces
are generated by finitely many Dehn twists~\cite[Theorem~4.1]{primer}.
Another application is the following classical Lemma.

\begin{lemma}[{\cite[Section~9.1]{primer}}]%
  \label{lem:a}
  Let $n \geq 2$, and
  let $\Delta$ be a closed disc and $\Delta_{n}$
  be $\Delta$ with $n$ marked points.
  Then the groups $\artin(A_{n-1})$, $\pi_1(C(\Delta, n))$, and
  $\mcg(\Delta_{n})$, are pairwise isomorphic.
\end{lemma}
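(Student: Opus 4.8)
The plan is to show that all three groups are isomorphic to the braid group on $n$ strands, by producing two separate isomorphisms: one between $\mcg(\Delta_n)$ and $\pi_1(C(\Delta, n))$ that falls straight out of the Birman exact sequence, and one between $\artin(A_{n-1})$ and $\pi_1(C(\Delta, n))$ that amounts to Artin's presentation of the braid group. Transitivity of isomorphism then yields the pairwise statement.

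First I would dispose of $\pi_1(C(\Delta, n)) \cong \mcg(\Delta_n)$. By Alexander's trick every orientation-preserving homeomorphism of a disc fixing the boundary is isotopic to the identity through such homeomorphisms, and in fact $\homeo^+(\Delta, \partial\Delta)$ is contractible; in particular its identity component is simply connected and $\mcg(\Delta) = 1$. Thus the hypotheses of Lemma~\ref{lem:birman} are met for $S = \Delta$, and the Birman exact sequence collapses to
\[
  1 \longrightarrow \pi_1(C(\Delta, n)) \longrightarrow \mcg(\Delta_n) \longrightarrow 1 \longrightarrow 1,
\]
which immediately gives the desired isomorphism.

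The substance of the lemma is therefore the identification $\artin(A_{n-1}) \cong \pi_1(C(\Delta, n))$. I would build a homomorphism $\phi \colon \artin(A_{n-1}) \to \pi_1(C(\Delta, n))$ by placing the $n$ basepoint configuration points collinearly and sending the $i$-th standard generator $s_i$ to the loop $\sigma_i$ that swaps the $i$-th and $(i+1)$-th points by a counter-clockwise half-rotation, fixing the rest. A direct check shows the $\sigma_i$ satisfy the braid relation $\sigma_i \sigma_{i+1} \sigma_i = \sigma_{i+1} \sigma_i \sigma_{i+1}$ and the commutation relation $\sigma_i \sigma_j = \sigma_j \sigma_i$ for $|i - j| \geq 2$; since $A_{n-1}$ is a path on $n-1$ vertices these are exactly its defining relations, so $\phi$ is well defined. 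Surjectivity follows from the classical fact that the elementary half-twists generate, which one proves inductively from the Fadell--Neuwirth fibrations on ordered configuration spaces together with the regular $S_n$-covering of $C(\Delta, n)$ by its ordered analogue.

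The main obstacle is injectivity of $\phi$, that is, the completeness of the braid and commutation relations; this is precisely the content of Artin's theorem and the only genuinely nontrivial point. I would establish it by passing through a faithful representation: the group $\pi_1(C(\Delta, n))$ acts on the free group $F_n \cong \pi_1(\Delta \setminus \{n \text{ points}\})$ via the Artin representation $\rho$, and the composite $\rho \circ \phi$ sends each $s_i$ to an explicit automorphism of $F_n$. Showing that $\rho \circ \phi$ is injective --- by recovering any reduced word in the $s_i$ from its action on the free generators --- forces $\phi$ itself to be injective. Alternatively, one can compute a presentation of $\pi_1(C(\Delta, n))$ directly from the asphericity of the configuration space together with the iterated fibration above and verify that it coincides with that of $\artin(A_{n-1})$. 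Either route certifies that $\phi$ has no kernel beyond the imposed relations, completing the three-way isomorphism.
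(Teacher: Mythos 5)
Your proposal is correct and follows exactly the route the paper intends: the paper states this lemma without proof as a classical consequence of the Birman exact sequence (citing Farb--Margalit, Section~9.1), and your argument --- collapsing the sequence over $\Delta$ using $\mcg(\Delta)=1$ to get $\pi_1(C(\Delta,n)) \cong \mcg(\Delta_n)$, then invoking Artin's presentation theorem for $\pi_1(C(\Delta,n)) \cong \artin(A_{n-1})$ --- is precisely the cited argument, and mirrors the proof the paper does write out for the annular analogue (Lemma~\ref{lem:annular}). The only caveat is that the injectivity step (faithfulness of the Artin representation, or equivalently completeness of the braid relations) is itself a substantial classical theorem that you correctly identify but only sketch, which is appropriate here since the paper likewise treats it as known.
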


Beware that the isomorphism between $\artin(A_{n-1})$ and
$\mcg(\Delta_{n})$ is not geometric,
because the generators are mapped to half-twists
(which are not Dehn twists).
It turns out that the Birman exact sequence
can also be applied to the annulus $Z$ rather than the
disc $\Delta$. This yields the following crucial result
for our work here.

\begin{lemma}
  \label{lem:annular}
  Let $n \geq 2$, and
  let $Z$ be an annulus and let $Z_n$ be $Z$ with
  $n$ marked points.
  Then the groups $\artin(B_n)$, $\pi_1(C(Z, n))$,
  and the
  kernel of the homomorphism
  $\mcg(Z_n) \to \mcg(Z)$
  forgetting that the marked points are marked,
  are pairwise isomorphic.
\end{lemma}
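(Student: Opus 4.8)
The plan is to prove the two substantive isomorphisms $\artin(B_n)\cong\pi_1(C(Z,n))$ and $\pi_1(C(Z,n))\cong\ker(\mcg(Z_n)\to\mcg(Z))$ separately, in close analogy with Lemma~\ref{lem:a}; the only genuinely new points are that the ambient surface is now the annulus rather than the disc and that the relevant Coxeter type is $B_n$ instead of $A_{n-1}$.

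For the second isomorphism I would apply Lemma~\ref{lem:birman} with $S=Z$, which first requires checking that the identity component of $\homeo^+(Z,\partial Z)$ is simply connected. I would verify this by fixing a properly embedded arc $a$ joining the two boundary circles and considering the fibration obtained by evaluating a homeomorphism on $a$: the space of arcs isotopic to $a$ rel endpoints is contractible, while the stabilizer of $a$ is, after cutting $Z$ open along $a$, the group of homeomorphisms of a disc fixing its boundary, which is contractible by Alexander's trick. Hence the identity component is itself contractible, in particular simply connected. Lemma~\ref{lem:birman} then yields the short exact sequence $1\to\pi_1(C(Z,n))\to\mcg(Z_n)\to\mcg(Z)\to1$, where surjectivity on the right is clear since the generator of $\mcg(Z)\cong\mathbb{Z}$ is a Dehn twist supported in a collar of $\partial Z$ disjoint from the marked points. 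Consequently $\ker(\mcg(Z_n)\to\mcg(Z))$ is exactly the image of $\pi_1(C(Z,n))$, and by injectivity the two groups are isomorphic.

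For the first isomorphism I would identify the interior of $Z$ with the punctured plane $\mathbb{C}^{*}=\mathbb{C}\setminus\{0\}$, so that $C(Z,n)$ is the space of $n$-element subsets of $\mathbb{C}^{*}$, and then recall that its fundamental group is $\artin(B_n)$. I would derive this from the reflection-arrangement description of finite-type Artin groups: writing $M\subset\mathbb{C}^{n}$ for the complement of the hyperplanes $z_i=\pm z_j$ and $z_i=0$, one has $\artin(B_n)\cong\pi_1(M/W)$ for $W$ the Coxeter group of type $B_n$ by Brieskorn's theorem, and the squaring map $(z_i)\mapsto(z_i^{2})$ identifies $M/W$ with the space of $n$-element subsets of $\mathbb{C}^{*}$, since $z_i^2=z_j^2$ holds exactly when $z_i=\pm z_j$ and $z_i^2=0$ exactly when $z_i=0$.

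The step I expect to require the most care is the hypothesis check in the Birman application. Unlike the disc, the annulus has $\mcg(Z)\cong\mathbb{Z}$, and for closed surfaces such as the torus the analogous identity component fails to be simply connected; the crucial point is that fixing $\partial Z$ pointwise rigidifies the loop of rotations that would otherwise contribute to $\pi_1$ of the identity component, and it is precisely this that the arc-cutting argument makes rigorous. The classical identification $\pi_1(C(Z,n))\cong\artin(B_n)$ could, if preferred, be cited rather than reproven.
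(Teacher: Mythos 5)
Your proof is correct, and its overall skeleton matches the paper's: both split the statement into the isomorphism $\artin(B_n)\cong\pi_1(C(Z,n))$ and the identification of $\pi_1(C(Z,n))$ with the kernel of the forgetful map, and both obtain the latter from the Birman exact sequence (Lemma~\ref{lem:birman}) once the identity component of $\homeo^+(Z,\partial Z)$ is known to be simply connected. Where you differ is that you prove the two ingredients the paper simply cites. For the contractibility of the identity component, the paper invokes Scott's lemma, whereas you reprove it by the evaluation-on-an-arc fibration plus the Alexander trick; this is the standard argument behind the cited result (note that the contractibility of the space of arcs isotopic to $a$ rel endpoints is itself a nontrivial input you would need to justify or cite). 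For $\artin(B_n)\cong\pi_1(C(Z,n))$, the paper cites Kent--Peifer, whereas you derive it from Brieskorn's theorem on reflection arrangement complements together with the squaring map $(z_i)\mapsto(z_i^2)$, which correctly identifies $M/W$ for the type $B_n$ arrangement with the configuration space of $n$ points in $\mathbb{C}^{*}\simeq\operatorname{int}Z$. Your route is more self-contained and makes the appearance of type $B_n$ conceptually transparent, at the cost of importing Brieskorn's theorem; the paper's route is shorter and keeps the lemma at the level of assembling known results. Your observation that surjectivity of $\mcg(Z_n)\to\mcg(Z)$ is witnessed by a Dehn twist supported away from the marked points is correct but not strictly needed, since only exactness at the middle term is used.
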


\begin{proof}
  In~\cite[Theorem One]{kent-peifer}, it is shown that
  $\artin(B_n)$ is isomorphic to the fundamental group
  $\pi_1 (C(Z, n))$
  of the configuration space of $n$ points in the annulus $Z$.
  Because the identity component of the
  space of
  orientation-preserving
  homeomorphisms of $Z$
  keeping the boundary fixed is contractible~\cite[Lemma~0.10]{scott},
  we can apply Lemma~\ref{lem:birman}
  to show that $\pi_1 (C (Z, n))$
  embeds into the mapping class group
  $\mcg(Z_n)$ of the annulus
  with $n$ marked points.
  More specifically, by exactness
  of the Birman exact sequence,
  the image of
  $\pi_1(C(Z, n))$ is the kernel of the
  homomorphism
  $\mcg(Z_n) \to \mcg(Z)$
  forgetting that the marked points are marked.
\end{proof}

\begin{remark}
The group $\pi_1(C(Z, n))$ can be thought of as
$n$-stranded braids in $Z \times [0, 1]$.
Projecting to the central curve of $Z$ at each height
in $[0, 1]$, but remembering which strand goes over
and which goes under, yields diagrams of elements
of $\pi_1(C(Z, n))$, similarly
as for the ordinary braid group $\pi_1(C(\Delta, n))$,
where one usually projects to a diameter of $\Delta$ at each height.
Let us refer to the generators of
$\artin(B_n)$ by the symbols $t, s_1, \dots, s_{n-1}$.
We also write $s_0 = \delta s_{n-1} \delta^{-1}$,
where $\delta = t s_1 \dots s_{n-1}$.
The isomorphism $\artin(B_n) \to \pi_1(C(Z, n))$
maps the elements $t, s_0, s_1, \dots, s_{n-1}$
to the annular braids depicted in Figure~\ref{fig:generators-b3}
for the case $n = 3$,
and stacks the braids from bottom to top when
reading the word in $\artin(B_n)$ from left to right.
\end{remark}

\begin{figure}[htb]
  \centering
  \begin{minipage}{0.25\textwidth}
    \centering
    \includegraphics{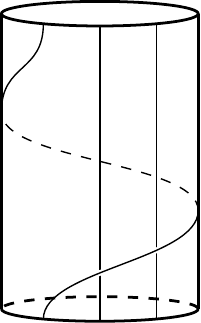}
  \end{minipage}%
  \begin{minipage}{0.25\textwidth}
    \centering
    \includegraphics{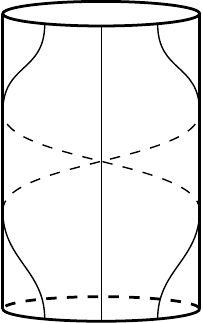}
  \end{minipage}%
  \begin{minipage}{0.25\textwidth}
    \centering
    \includegraphics{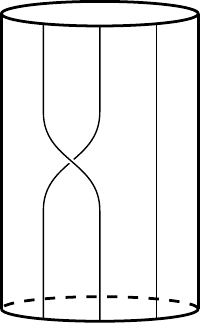}
  \end{minipage}%
  \begin{minipage}{0.25\textwidth}
    \centering
    \includegraphics{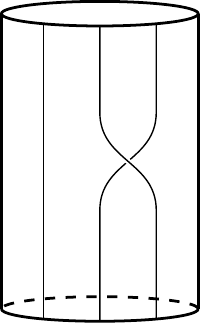}
  \end{minipage}%
  \caption{The images of the elements $t$, $s_0$, $s_1$, $s_2$ of
  $\artin(B_3)$ in $\pi_1(C(Z, 3))$}%
  \label{fig:generators-b3}
\end{figure}

\begin{remark}
  We can also explicitly describe the images of
  the elements $t, s_0, s_1, \dots, s_{n-1}$
  in $\mcg(\Delta_n)$.
  The element $t$ maps to a product
  $T_\alpha T_\beta^{-1}$
  of two Dehn twists about two curves
  $\alpha$ (inner) and  $\beta$ (outer)
  depicted in Figure~\ref{fig:generators-cylinder}.
  More importantly for us,
  the elements $s_i$ map to
  so-called
  \emph{half-twists}~\cite[Section~9.1.3]{primer}
  about the arcs depicted in Figure~\ref{fig:generators-cylinder}.
\end{remark}

\begin{figure}[htb]
  \centering
  \begin{minipage}{0.25\textwidth}
    \centering
    \includegraphics{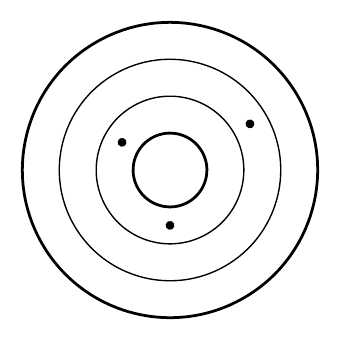}
  \end{minipage}%
  \begin{minipage}{0.25\textwidth}
    \centering
    \includegraphics{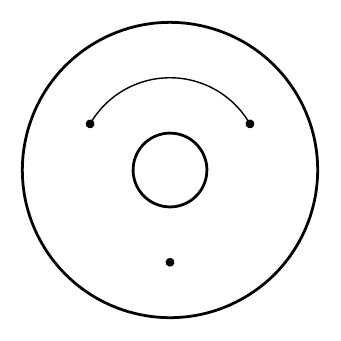}
  \end{minipage}%
  \begin{minipage}{0.25\textwidth}
    \centering
    \includegraphics{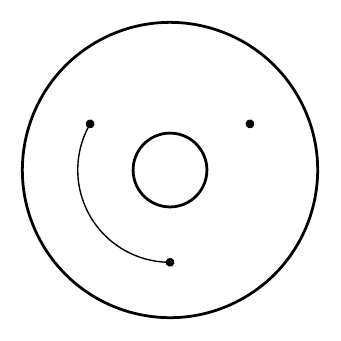}
  \end{minipage}%
  \begin{minipage}{0.25\textwidth}
    \centering
    \includegraphics{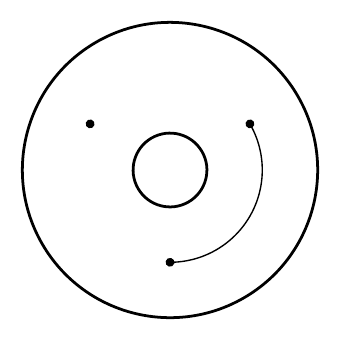}
  \end{minipage}%
  \caption{The images of the elements $t, s_0, s_1, s_2$ of $\artin(B_3)$
  in $\mcg(Z_n)$}%
  \label{fig:generators-cylinder}
\end{figure}

We are now ready to describe the group $\artin(\widetilde A_{n-1})$
appearing in Theorem~\ref{thm:regular-nbhd}
quite explicitly.

\begin{lemma}[%
  {\cite[Section~1]{charney-peifer}}]\label{lem:atilde}
  Let $t, s_1, \dots, s_{n-1}$ be the standard generators of
  $\artin(B_{n})$
  corresponding to the vertices
  in Figure~\ref{fig:dynkin} from left to right,
  and let
  $s_n = \delta s_{n-1} \delta^{-1}$,
  where $\delta = t s_1 \cdots s_{n-1}$.
  Then the subgroup $G$ of $\artin (B_{n})$
  generated by the $s_i$ is
  isomorphic to $\artin(\widetilde A_{n-1})$
  under an isomorphism
  mapping the $s_i$
  to the standard generators.
  Moreover, $G$ is the kernel of the homomorphism
  $\artin(B_{n}) \to \mathbb{Z}$ mapping~$t$ to one and
  all other generators to zero.
\end{lemma}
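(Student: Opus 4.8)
The plan is to exhibit $\artin(B_n)$ as a semidirect product $G \rtimes \langle \delta \rangle \cong \artin(\widetilde{A}_{n-1}) \rtimes \mathbb{Z}$ in which $\delta$ acts on the normal factor by cyclically rotating the standard generators. First I would verify that the assignment $t \mapsto 1$ and $s_i \mapsto 0$ respects the defining relations of $\artin(B_n)$; the only nonobvious check is the weight-$4$ relation $t s_1 t s_1 = s_1 t s_1 t$, whose two sides both have $t$-exponent sum $2$. This gives a well-defined surjection $\pi \colon \artin(B_n) \to \mathbb{Z}$ with $\pi(\delta) = 1$. Next, writing $\tau = s_1 \cdots s_{n-1}$ and using the classical braid-group identity $\tau s_i \tau^{-1} = s_{i+1}$ (valid for $1 \le i \le n-2$) together with the fact that $t$ commutes with $s_j$ for $j \geq 2$, I would deduce $\delta s_i \delta^{-1} = s_{i+1}$ for $1 \le i \le n-1$, the case $i = n-1$ being the very definition of $s_n$.

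The key structural input is the wrap-around identity $\delta s_n \delta^{-1} = s_1$. Iterating the previous step yields $s_i = \delta^{i-1} s_1 \delta^{-(i-1)}$, so that $\delta s_n \delta^{-1} = \delta^n s_1 \delta^{-n}$, and it therefore suffices that $\delta^n$ commute with $s_1$. I would obtain this from the centrality of $\delta^n$ in $\artin(B_n)$: under the identification of Lemma~\ref{lem:annular}, $\delta$ is represented by the rigid rotation of the annulus $Z$ carrying the $n$ marked points cyclically onto one another, so $\delta^n$ rotates the entire configuration once around $Z$ and hence commutes with every annular braid. With the full cyclic action $\delta s_i \delta^{-1} = s_{i+1}$ (indices mod $n$) established, every defining relation of $\artin(\widetilde{A}_{n-1})$ among $s_1, \dots, s_n$ becomes a $\delta$-conjugate of one of the linear braid or commutation relations already holding among $s_1, \dots, s_{n-1}$ inside $\artin(B_n)$. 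Hence, writing $\sigma_1, \dots, \sigma_n$ for the standard generators of $\artin(\widetilde{A}_{n-1})$, the assignment $\sigma_i \mapsto s_i$ defines a homomorphism $\phi \colon \artin(\widetilde{A}_{n-1}) \to \artin(B_n)$ whose image is $G := \langle s_1, \dots, s_n \rangle$.

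That $G$ equals $\ker \pi$ is then a short argument: each $s_i$ lies in $\ker \pi$, while $\delta$ normalizes $G$ (since $\delta^{\pm 1} s_i \delta^{\mp 1} \in G$ by the cyclic action) and $t = \delta s_{n-1}^{-1} \cdots s_1^{-1} \in \langle \delta \rangle G$, so $\artin(B_n) = \langle \delta \rangle G$; writing any $g$ with $\pi(g) = 0$ as $g = \delta^k h$ with $h \in G$ forces $k = \pi(g) = 0$, giving $g \in G$. Since $\pi$ restricts to an isomorphism $\langle \delta \rangle \to \mathbb{Z}$, this realizes $\artin(B_n) = G \rtimes \langle \delta \rangle$ and proves the second assertion of the lemma.

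It remains to prove that $\phi$ is injective, and this I expect to be the main obstacle. I would establish it by producing an explicit inverse: form the abstract semidirect product $H = \artin(\widetilde{A}_{n-1}) \rtimes \mathbb{Z}$ in which the generator $d$ of $\mathbb{Z}$ acts by the order-$n$ rotation $\sigma_i \mapsto \sigma_{i+1}$ of the $\widetilde{A}_{n-1}$ diagram, and define $\Psi \colon \artin(B_n) \to H$ by $s_i \mapsto \sigma_i$ for $1 \le i \le n-1$ and $t \mapsto d \sigma_{n-1}^{-1} \cdots \sigma_1^{-1}$. The delicate point is checking that $\Psi$ respects the $B_n$ relations; all but the weight-$4$ relation are immediate, and verifying the weight-$4$ relation inside $H$ is precisely where the rotation action must be unwound, making it the step most prone to indexing and sign errors. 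Once $\Psi$ is seen to be well defined, it and the homomorphism $\Phi \colon H \to \artin(B_n)$ extending $\phi$ by $d \mapsto \delta$ are mutually inverse, as one checks on generators; restricting to the normal factors shows $\phi$ is an isomorphism onto $G = \ker \pi$ carrying each $\sigma_i$ to $s_i$, as claimed.
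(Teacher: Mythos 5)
The paper does not actually prove this lemma --- it is quoted verbatim from Charney--Peifer --- so there is no internal proof to compare against. Your proposal is, in substance, a correct reconstruction of the argument in that reference: exhibiting $\artin(B_n)$ as $\artin(\widetilde A_{n-1}) \rtimes \mathbb{Z}$ with $\delta$ acting by the rotation of the cyclic diagram, and reading off both assertions of the lemma from that decomposition. The two points you flag but do not carry out both go through. First, the centrality of $\delta^n$ is the standard fact that $\delta^n$ generates the center of $\artin(B_n)$ (Brieskorn--Saito/Deligne), so you could cite it rather than route through the annular-braid picture of Lemma~\ref{lem:annular}; if you do use the geometric picture, you should say why the full rotation is central (it lies in the image of the evaluation map from $\pi_1$ of the rotation circle in $\homeo(Z)$, and such classes commute with everything). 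Second, the weight-$4$ check for $\Psi$ does succeed: since $\Psi(t)\sigma_1 = d(\sigma_2\cdots\sigma_{n-1})^{-1}$, the relation $(\Psi(t)\sigma_1)^2 = (\sigma_1\Psi(t))^2$ unwinds, after pushing the $d$'s to the left, to the identity
\[
  (\sigma_2\cdots\sigma_{n-1})^{-1}\,\sigma_1\,(\sigma_2\cdots\sigma_{n-1})
  = (\sigma_1\cdots\sigma_{n-2})\,\sigma_{n-1}\,(\sigma_1\cdots\sigma_{n-2})^{-1},
\]
which is a consequence of the braid and commutation relations among $\sigma_1, \dots, \sigma_{n-1}$ alone and hence holds in $\artin(\widetilde A_{n-1})$. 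With those two verifications supplied, the proof is complete.
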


\subsection{Birman-Hilden theory}\label{ssec:birman-hilden}
In the 1970s, Birman and Hilden proved
highly influential results about fiber-preserving
isotopies.
Their work spawned an entire research area
referred to as Birman-Hilden theory,
recently surveyed by Margalit
and Winarski~\cite{margalit-winarski}.
Only a very small part of this theory
will find its way into this text.

For an orientation-preserving
involution $\iota$ on a surface $S$,
let us write $S/\iota$ for the quotient
of $S$ by $\iota$ with the images of the
fixed points of $\iota$ marked.
With this notation,
we have the following reformulation of
the classical Birman-Hilden theorem.

\begin{lemma}\label{lem:birman-hilden}
  Let the surface $S$
  have at least one boundary component,
  and let $\iota$ be a continuous involution on $S$
  with finitely many fixed points.
  Suppose that $\iota$ leaves the
  curves
  $\alpha_i$ invariant as sets
  and restricts to a reflection of each $\alpha_i$.
  Then there is a well-defined homomorphism
  $\cgroup(S) \to \mcg(S/\iota)$
  mapping the Dehn twists~$T_{i}$ about the $\alpha_i$
  to half-twists.
\end{lemma}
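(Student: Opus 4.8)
The plan is to realise $\cgroup(S)$ inside the group of \emph{symmetric} (that is, $\iota$-equivariant) mapping classes of $S$ and then to descend to the quotient $S/\iota$, the nontrivial point being that this descent is well defined on mapping classes, which is exactly the content of the Birman--Hilden theorem.

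First I would show that each Dehn twist $T_i$ admits an $\iota$-equivariant representative. Since $\iota$ leaves $\alpha_i$ invariant and restricts to a reflection of it, while being orientation-preserving on $S$, it acts on a suitable closed annular neighbourhood $A_i$ of $\alpha_i$ by the model $(z,t) \mapsto (\bar z, -t)$ on $S^1 \times [-1,1]$, fixing precisely the two points of $\alpha_i$ fixed by the reflection. A direct check shows that the standard Dehn twist supported in $A_i$, with a symmetric twisting profile, commutes with this model; hence $T_i$ is represented by a homeomorphism commuting with $\iota$. Consequently every element of $\cgroup(S)$, being a product of the $T_i^{\pm 1}$, is represented by an $\iota$-equivariant homeomorphism, so $\cgroup(S)$ consists of symmetric mapping classes.

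Next I would use that an $\iota$-equivariant homeomorphism $\phi$ permutes the fibres of the branched double cover $p \colon S \to S/\iota$ and carries the fixed-point set of $\iota$ to itself, hence descends to a homeomorphism $\bar\phi$ of $S/\iota$ fixing the marked points (and fixing $\partial(S/\iota)$, as $\phi$ fixes $\partial S$); moreover $\phi \mapsto \bar\phi$ is multiplicative at the level of homeomorphisms. To identify the image of $T_i$ I would inspect the local model on $A_i$: the quotient $A_i/\iota$ is a disc carrying the two images of the fixed points as marked points, the image $p(\alpha_i)$ is an arc joining them, and the equivariant twist descends to the half-twist about this arc, the same computation underlying the images of the $s_i$ in Figure~\ref{fig:generators-cylinder}.

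The main obstacle is well-definedness on mapping classes: the class of $\bar\phi$ in $\mcg(S/\iota)$ must depend only on the class of $\phi$ in $\mcg(S)$, not on the chosen equivariant representative. Equivalently, an $\iota$-equivariant homeomorphism that is isotopic to the identity through arbitrary, not necessarily equivariant, homeomorphisms must descend to a homeomorphism isotopic to the identity. This is precisely where the Birman--Hilden theorem enters: under its hypotheses a symmetric homeomorphism freely isotopic to the identity is equivariantly isotopic to a deck transformation, and such an equivariant isotopy descends to an isotopy in $S/\iota$, deck transformations themselves descending to the identity. The assumption that $S$ have nonempty boundary is what keeps the surface complex enough to fall within the scope of the theorem, away from the low-complexity exceptions, and checking that we are in this scope is the crux of the argument. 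Once well-definedness is secured, multiplicativity of $\phi \mapsto \bar\phi$ upgrades at once to the desired homomorphism $\cgroup(S) \to \mcg(S/\iota)$ sending each $T_i$ to a half-twist.
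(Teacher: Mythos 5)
Your proposal is correct and follows essentially the same route as the paper: represent each $T_i$ by an $\iota$-equivariant homeomorphism, invoke the Birman--Hilden theorem to make the descent to $\mcg(S/\iota)$ well defined on mapping classes, and identify the image of $T_i$ as a half-twist from the local model in which $\iota$ swaps the two boundary components of an annular neighbourhood of $\alpha_i$. The paper's proof is simply a terser version of the same argument.
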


\begin{proof}
  Let $f$ be a symmetric homeomorphism of
  $S$, i.e., a homeomorphism
  that commutes with $\iota$.
  Then $f$ induces a homeomorphism $\overline f$ on
  the quotient $S/\iota$.
  The Birman-Hilden theorem~\cite[Theorem~1]{birman-hilden}
  asserts that
  the mapping class of $\overline f$
  does not depend on the symmetric representative of
  the mapping class of $f$.
  The Dehn twist $T_i$ about a simple closed curve $\alpha_i$
  is symmetric up to isotopy,
  so we obtain a homomorphism $\cgroup(S) \to \mcg(S/\iota)$.
  Moreover, $\iota$ restricts to an involution of an annular neighbourhood
  of $\alpha_i$ exchanging the two boundary components.
  Hence, $\overline T_i$ is a half-twist.
\end{proof}

It is worth pointing out the
nontrivial part of the proof of the result
referred to as the Birman-Hilden theorem
in Farb-Margalit's book~\cite[Section~9.4.1]{primer}
is Lemma~\ref{lem:birman-hilden},
formulated in a slightly different way~\cite[Proposition~9.4]{primer}.
The involutions $\iota$ they use yield well-defined maps
from the braid group $\artin(A_{n})$
on $n+1$ strands
to the group generated by Dehn twists
about a chain of $n$ curves (each
curve intersecting the previous and the next).

\section{Neighbourhoods of circuits}\label{sec:nbhds}
This section is concerned with the proof of
Theorem~\ref{thm:regular-nbhd}.
Let $\alpha_1, \dots \alpha_n$ be a circuit.
Up to orientation-preserving homeomorphism,
there are two possible regular neighbourhoods
of the union $\alpha_1 \cup \cdots \cup \alpha_n$,
see Figure~\ref{fig:nbhds}.
One way to see this is as follows.
There is only one possible
regular neighbourhood of the smaller set
$\alpha_1 \cup \cdots \cup \alpha_{n-1}$.
Now the curve $\alpha_n$ might sit in the regular neighbourhood
in two different ways.

\begin{figure}[htb]
  \centering
  \begin{minipage}{0.31\textwidth}
    \centering
    \includegraphics{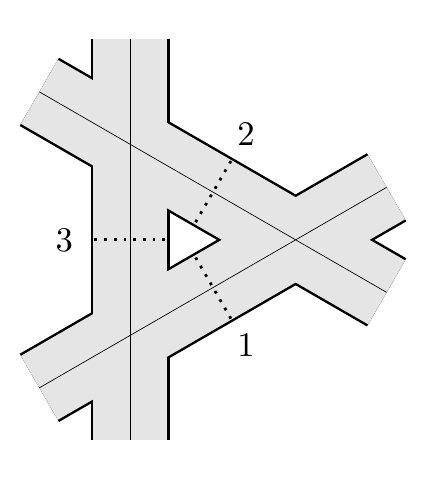}
  \end{minipage}%
  \begin{minipage}{0.34\textwidth}
    \centering
    \includegraphics{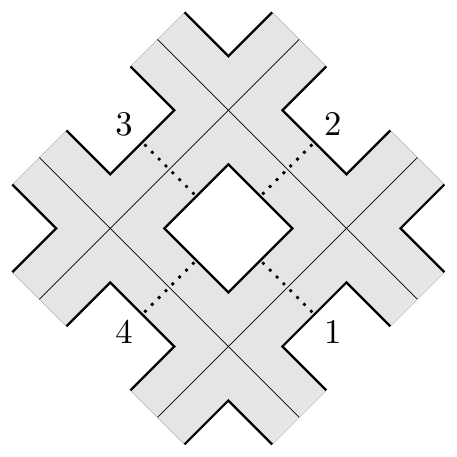}
  \end{minipage}%
  \begin{minipage}{0.34\textwidth}
    \centering
    \includegraphics{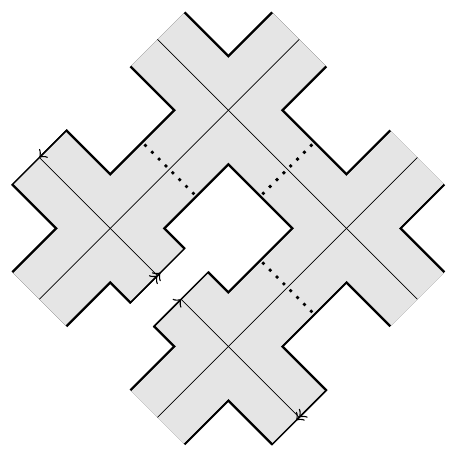}
  \end{minipage}%
  \caption{The neighbourhoods $N^3$, $N^4$, $M^4$.
  Opposite ends of the strips are identified,
unless indicated otherwise.}%
  \label{fig:nbhds}
\end{figure}

If $n$ is odd, those two ways lead to regular neighbourhoods
$N^{n}_{\circlearrowleft}$
and
$N^{n}_{\circlearrowright}$
that are related by an orientation-reversing homeomorphism.
For brevity, we will abbreviate
$N_{\circlearrowleft}^{n}$
by the symbol $N^{n}$
and usually not talk about $N^{n}_{\circlearrowright}$
explicitly,
as all the results about $N^{n}$ carry
over to $N^{n}_{\circlearrowright}$
by enumerating the $\alpha_i$ in the opposite order.
The left-hand side of Figure~\ref{fig:plastic-odd}
below
displays a drawing of
the surface $N^n$ embedded into $\mathbb{R}^3$.

If $n$ is even, the two possible
neighbourhoods $N^{n}$ and $M^{n}$ are
related to themselves via an orientation-reversing
homeomorphism, so orientation is less of a concern
in this case.
The neighbourhoods $N^{n}$ and $M^{n}$ differ,
for example, in their number of boundary components:
$N^{n}$ has four and $M^{n}$ just two.
A drawing of the surfaces $N^n$ and $M^n$
embedded into $\mathbb{R}^3$
can be found in Figure~\ref{fig:plastic-even}.

\begin{figure}[htb]
  \centering
  \begin{minipage}{0.52\textwidth}
    \centering
    \includegraphics{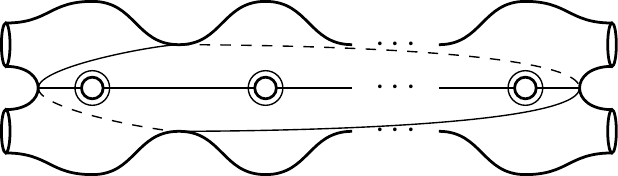}
  \end{minipage}%
  \begin{minipage}{0.48\textwidth}
    \centering
    \includegraphics{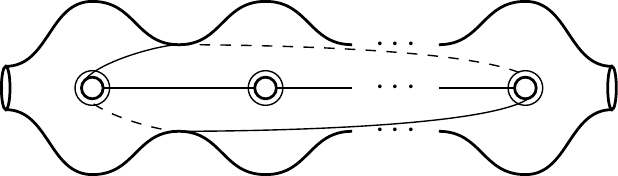}
  \end{minipage}%
  \caption{Another view of the surfaces $N^n$ and $M^n$ for even $n$}%
  \label{fig:plastic-even}
\end{figure}

Now let $S$ be any of the above regular neighbourhoods
of $\alpha_1 \cup \cdots \cup \alpha_n$.
Notice that in each case, $S$
can be thought of as a union of $n$ cross-shaped
pieces,
see Figure~\ref{fig:nbhds}.
Turning all those pieces by an angle of
$\pi$ yields a well-defined involution $\iota$ of $S$.
We will call~$\iota$ the \emph{cross-involution}.
Note that in the drawings from Figures~\ref{fig:plastic-even}
and~\ref{fig:plastic-odd}, the cross-involution
is a rotation about the $x$-axis by an angle of $\pi$.

\begin{proof}[Proof of Theorem~\ref{thm:regular-nbhd}]
  With the notation from Section~\ref{ssec:birman-hilden},
  the surface $S/\iota$ is an annulus
  with $n$ marked points
  in each case.
  This may be verified by counting the number
  of boundary components of $S/\iota$ and computing
  its Euler characteristic.
  By Lemma~\ref{lem:birman-hilden},
  there exists a homomorphism
  $\varphi \colon \cgroup(S) \to \mcg(Z_{n})$
  mapping the Dehn twists $T_i$ to half-twists.
  By Lemma~\ref{lem:atilde}, the image of $\varphi$
  is isomorphic to $\artin(\widetilde A_{n-1})$.
  But the inverse homomorphism $\artin(\widetilde A_{n-1})
  \to \cgroup(S)$
  mapping the generator $s_i$ to $T_i$ is well-defined.
  Hence, $\varphi$ is an isomorphism.
\end{proof}

\section{Gluing in discs}\label{sec:discs}
In this section, we glue in discs to regular
neighbourhoods of circuits in order to obtain
more geometric embeddings of Artin groups,
which we will need in order to prove
the right-to-left direction of
Theorem~\ref{thm:cycle-relation}.
In fact, the results in this section
are much stronger than what is needed.
Each possible combination of discs
that can be glued in to neighbourhoods
is investigated separately,
and a few interesting relations between Dehn twists
about circuits are discussed.

\subsection{Building circuit surfaces}\label{ssec:building}
We now adopt the perspective that the circuit
$\alpha_1, \dots, \alpha_n$ stays fixed
while the surface $S$ containing it varies.
We will write $\cgroup(S)$ for the subgroup
of $\mcg(S)$
generated by the Dehn twists $T_i$
about $\alpha_i$.
The inclusion homomorphism theorem asserts the following.

\begin{lemma}[{\cite[Theorem~3.18]{primer}}]\label{lem:inclusion-homo}
  Suppose $S$ and~$S'$ are closed
  and connected
  subsurfaces
  of a surface $S \cup S'$
  with disjoint interiors.
  Let $K$ be the kernel of the inclusion-induced
  homomorphism $\mcg(S) \to \mcg(S \cup S')$.
  Then:
  \begin{enumerate}[\normalfont(i)]
    \itemsep0em
    \item If $S' = \Delta_1$ is a once-marked disc
      with $\partial \Delta_1 \subset \partial S$,
      then $K$ is cyclically generated by
      the Dehn twist $T_\alpha$ about the
      boundary curve of $\Delta_1$.
    \item If $S' = Z$ is an annulus with $\partial Z \subset \partial S$,
      then $K$
      is cyclically generated by $T_\alpha T_\beta^{-1}$,
      where $T_\alpha$ and $T_\beta$ are Dehn twists
      about the boundary curves $\alpha$ and $\beta$
      of $Z$,
      respectively.
    \item If $S'$ is neither a disc,
      a once-marked disc,
      nor an annulus,
      then $K$ is trivial.
  \end{enumerate}
  In particular, $K$ is a subgroup
  of the center of $\mcg(S)$.
\end{lemma}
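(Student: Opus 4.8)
The plan is to treat the three cases separately, identifying each as a standard consequence of the Birman exact sequence (Lemma~\ref{lem:birman}), and then to read off the final centrality assertion from the fact that boundary twists are central. Throughout, the kernel $K$ consists of those $f \in \mcg(S)$ whose extension $\hat f$ by the identity over $S'$ is isotopic to the identity on $X = S \cup S'$.

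For case (i), gluing the once-marked disc $\Delta_1$ to $S$ along $\alpha = \partial\Delta_1$ amounts to capping the boundary component $\alpha$ of $S$ by a once-marked disc, so the inclusion-induced map $\mcg(S) \to \mcg(X)$ is the capping homomorphism. I would realize its kernel via Lemma~\ref{lem:birman}: forgetting the marked point $p$ at the centre of $\Delta_1$ relates $\mcg(X)$ to the mapping class group of $S$ with $\alpha$ capped by an \emph{unmarked} disc, while the point-pushing subgroup $\pi_1$ of that surface accounts exactly for the difference. Chasing the resulting diagram shows that $T_\alpha$ is killed (it bounds the capping disc) and that the only relation introduced identifies a small loop of $p$ around $\partial\Delta_1$ with $T_\alpha$; hence $K = \langle T_\alpha \rangle$, which is infinite cyclic since a boundary twist has infinite order.

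For case (ii), gluing the annulus $Z$ along both of its boundary curves $\alpha, \beta \subset \partial S$ is the operation inverse to cutting $X$ along the core curve $\gamma$ of $Z$: cutting $X$ along $\gamma$ returns $S$, whose boundary curves $\alpha, \beta$ are the two sides of $\gamma$. In $X$ the curves $\alpha$ and $\beta$ are isotopic (the annulus $Z$ realises the isotopy), so $T_\alpha = T_\beta$ in $\mcg(X)$ and thus $T_\alpha T_\beta^{-1} \in K$. To see $K$ is exactly $\langle T_\alpha T_\beta^{-1}\rangle$, I would invoke the cutting homomorphism, again a consequence of Lemma~\ref{lem:birman}, whose kernel is generated by the difference of the twists about the two boundary curves created by the cut; this element has infinite order.

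Case (iii) is the genuinely topological one, and I expect it to be the main obstacle. Since $S'$ is connected and is neither a disc, a once-marked disc, nor an annulus, each gluing curve stays essential in $X$ (it bounds neither a disc nor a once-marked disc on the $S'$ side) and no two gluing curves become isotopic in $X$ (they would otherwise cobound an annulus, forcing $S'$ to be an annulus). Given $f \in \mcg(S)$ with $\hat f$ isotopic to the identity on $X$, I would use the Alexander method together with the observation that $\hat f$ fixes the isotopy classes of these essential, pairwise non-isotopic gluing curves to arrange the isotopy so that it preserves $\partial S$ setwise; restricting it to $S$ then yields $f = 1$, so $K$ is trivial. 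The delicate point is precisely this last step — justifying that an isotopy carried out in the larger surface $X$ can be pushed off $S'$ and made to respect the gluing curves — whereas everything else is bookkeeping with the Birman exact sequence. Finally, the ``in particular'' statement holds uniformly: in every case $K$ is generated by a boundary twist $T_\alpha$ or by a difference $T_\alpha T_\beta^{-1}$ of boundary twists, and any boundary twist is central in $\mcg(S)$ because each mapping class fixes $\partial S$ pointwise and therefore sends a boundary-parallel curve to an isotopic one.
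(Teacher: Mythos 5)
The paper offers no proof of this lemma at all: it is quoted directly from Farb--Margalit (Theorem~3.18 of the Primer), so there is no internal argument to compare yours against. Judged on its own terms, your outline follows the standard route --- (i) and (ii) are the capping and cutting homomorphisms, (iii) is the essential case --- but there is a genuine gap in (iii), and the step as written would fail. You argue that the isotopy $\hat f \simeq \mathrm{id}$ in $X = S \cup S'$ can be arranged to preserve $\partial S$ setwise, and that ``restricting it to $S$ then yields $f = 1$.'' Restricting such an isotopy to $S$ only preserves $\partial S$ setwise, not pointwise: it may wind around the gluing curves, so the correct intermediate conclusion is only that $f$ is a product of powers of Dehn twists about those curves. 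This is not a technicality --- it is exactly the mechanism that produces the nontrivial kernels in cases (i) and (ii), so an argument concluding $f=1$ directly from the existence of a boundary-preserving isotopy in $X$ would equally ``prove'' that $K$ is trivial there, which is false. To finish (iii) you need the additional fact that Dehn twists about disjoint, essential, pairwise non-isotopic curves generate a free abelian group of full rank in $\mcg(X)$, so that no nontrivial product of gluing-curve twists dies; the same reduction is what pins the kernels in (i) and (ii) down to exactly $\langle T_\alpha\rangle$ and $\langle T_\alpha T_\beta^{-1}\rangle$ rather than something larger.

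A smaller point: for case (i), Lemma~\ref{lem:birman} (forgetting marked points) compares $\mcg(S\cup\Delta_1)$ with the mapping class group of the unmarked capped surface, whereas the map you actually need to analyse is the capping homomorphism $\mcg(S)\to\mcg(S\cup\Delta_1)$; its kernel computation (Primer, Proposition~3.19) rests on the disc-pushing/unit-tangent-bundle version of the Birman sequence or on the boundary-twist analysis above, so invoking Lemma~\ref{lem:birman} alone leaves a circularity. Your closing centrality argument is correct: $fT_\alpha f^{-1}=T_{f(\alpha)}=T_\alpha$ for boundary-parallel $\alpha$.
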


Note that the inclusion homomorphism theorem
makes no assertion in the case that $S'$
is a disc $\Delta$.
This means that to compare
$\cgroup(S \cup \Delta)$ to
$\cgroup(S)$ is expected to require
more creativity than to compare
$\cgroup(S \cup S')$
to $\cgroup(S)$
for other surfaces $S'$.

Not all boundary components
of neighbourhoods of $\alpha_1 \cup \cdots \cup \alpha_n$
are equivalent.
Considering intersection points between the $\alpha_i$
as vertices of polygons allows us to make the following
observation.
If $n$ is odd, then two boundary components
of $N^{n}$ are exchanged by the cross-involution.
They both have the property that if they are capped by a disc,
then $\alpha_1, \dots, \alpha_n$ bound an $n$-gon.
We will denote such a disc by $\Delta^1$,
and the union of two such discs by $2\Delta^1$.
Both of these will make the circuit
bound embedded closed discs.
The third boundary component can be capped off
by a disc~$\Delta^2$. The curves $\alpha_1, \dots, \alpha_n$
bound a $2n$-gon in $S \cup \Delta^2$.
Note that the circuit does not bound a closed
embedded disc in $N^n \cup \Delta^2$.
For brevity, we will sometimes say that
the boundary of $\Delta^1$ \emph{is} an $n$-gon
and that the boundary of $\Delta^2$
\emph{is}
a $2n$-gon.
See Figure~\ref{fig:surface-variants-odd}
for a
visual description
of the various discs.

\begin{figure}[htb]
  \centering
  \begin{minipage}{0.25\textwidth}
    \centering
    \includegraphics{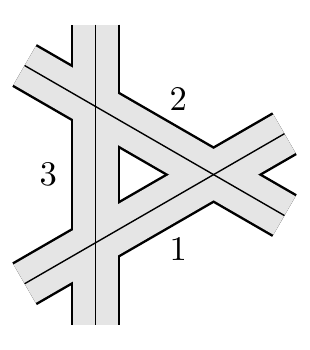}
  \end{minipage}%
  \begin{minipage}{0.25\textwidth}
    \centering
    \includegraphics{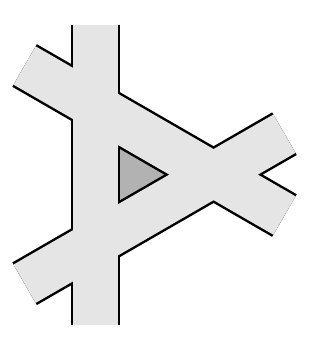}
  \end{minipage}%
  \begin{minipage}{0.25\textwidth}
    \centering
    \includegraphics{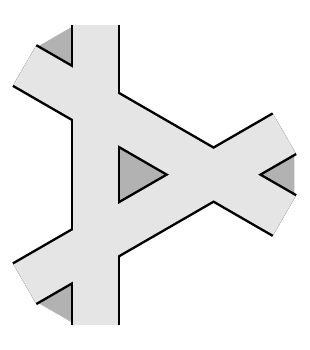}
  \end{minipage}%
  \begin{minipage}{0.25\textwidth}
    \centering
    \includegraphics{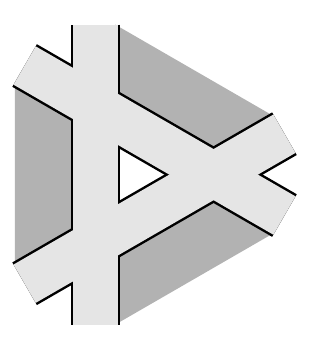}
  \end{minipage}%
  \caption{The surfaces $N^{3}$,
$N^{3} \cup \Delta^1$, $N^{3} \cup 2\Delta^1$, $N^{3} \cup \Delta^2$.
}%
  \label{fig:surface-variants-odd}
\end{figure}

If $n$ is even, the situation is qualitatively different.
All four boundary components of $N^{n}$ are
$n$-gons, and both boundary components
of $M^{n}$ are $2n$-gons.
But they differ in the following way.
Travelling around the boundary component in the counter-clockwise
direction, as seen from the center of the disc,
the curves might appear in the order $\alpha_1, \dots, \alpha_n$
or the other way around.
In the first case, we will write $\Delta_\circlearrowleft$,
and $\Delta_\circlearrowright$ otherwise,
with the appropriate superscript numbers.
See Figure~\ref{fig:surface-variants-even} for
the different discs in $N^{n}$.
We will usually abbreviate $\Delta_{\circlearrowleft}$
by $\Delta$ with the appropriate superscript number,
and we will abbreviate
$\Delta^2_\circlearrowleft \cup \Delta^2_\circlearrowright$
by $2\Delta^2$.
A list of all surfaces that are obtained by gluing in discs
to a regular neighbourhood of $\alpha_1 \cup \cdots \cup \alpha_n$
can be found in Table~\ref{tab:summary}.
Note that gluing in any disc into $N^n$ makes
the circuit bound an embedded closed disc,
and no disc in $M^n$ has this effect.

\begin{figure}[htb]
  \centering
  \begin{minipage}{0.25\textwidth}
    \centering
    \includegraphics{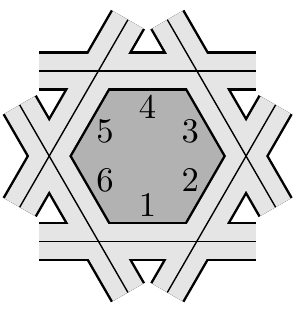}
  \end{minipage}%
  \begin{minipage}{0.25\textwidth}
    \centering
    \includegraphics{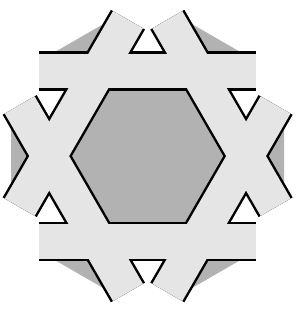}
  \end{minipage}%
  \begin{minipage}{0.25\textwidth}
    \centering
    \includegraphics{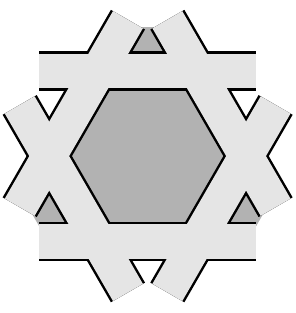}
  \end{minipage}%
  \begin{minipage}{0.25\textwidth}
    \centering
    \includegraphics{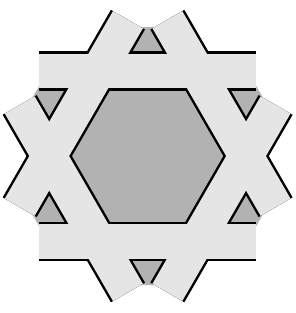}
  \end{minipage}%
  \caption{The surfaces $N^6 \cup \Delta^1_\circlearrowleft$,
  $N^6 \cup 2\Delta^1_\circlearrowleft$,
  $N^6 \cup \Delta^1_\circlearrowleft \cup \Delta^1_\circlearrowright$,
$N^6 \cup \Delta^1_\circlearrowleft \cup 2\Delta^1_\circlearrowright$}%
  \label{fig:surface-variants-even}
\end{figure}

\begin{table}[htb]
  \centering
  \begin{tabular}{lll}
    \toprule
    $S$ & $\cgroup(S)$ & Reference \\
    \midrule
    $N^n$ & $\artin(\widetilde A_{n-1})$ & Theorem~\ref{thm:regular-nbhd} \\
    $N^{n} \cup \Delta^1$ & $\artin(D_{n})$ &
    Proposition~\ref{prop:dn} \\
    $N^n \cup 2 \Delta^1$ & $\artin(A_{n-1})$ &
    Proposition~\ref{prop:an} \\
    \midrule
    $N^{2k + 1} \cup \Delta^2$ & $\artin(\widetilde A_{2k})$ &
    Proposition~\ref{prop:antilde} \\
    $N^{2k + 1} \cup \Delta^1 \cup \Delta^2$ & not Artin &
    Proposition~\ref{prop:special-disc} \\
    $N^{2k + 1} \cup 2\Delta^1 \cup \Delta^2$ & not Artin &
    Proposition~\ref{prop:odd-closed} \\
    \midrule
    $N^{2k+4} \cup \Delta^1_\circlearrowleft \cup \Delta^1_\circlearrowright$
      & not Artin & Proposition~\ref{prop:not-artin6} \\
      $N^{2k+4} \cup 2\Delta^1_\circlearrowleft \cup \Delta^1_\circlearrowright$
      & not Artin & Proposition~\ref{prop:not-artin6} \\
      $N^{2k+4} \cup 2\Delta^1_\circlearrowleft \cup 2\Delta^1_\circlearrowright$
      & not Artin & Proposition~\ref{prop:not-artin6} \\
    \midrule
    $N^4 \cup \Delta^1_\circlearrowleft \cup \Delta^1_\circlearrowright$ & $\artin (A_3)$ &
    Proposition~\ref{prop:4} \\
    $N^4 \cup 2\Delta^1_\circlearrowleft \cup \Delta^1_\circlearrowright$ & $\artin(A_2)$ &
    Proposition~\ref{prop:4} \\
    $N^4 \cup 2\Delta^1_\circlearrowleft \cup 2\Delta^1_\circlearrowright$ & $\specli(2, \mathbb{Z})$ &
    Proposition~\ref{prop:4} \\
    \midrule
    $M^{2k+2}$ & $\artin(\widetilde A_{2k+1})$ & Theorem~\ref{thm:regular-nbhd} \\
    $M^{2k+2} \cup \Delta^2$ & $\artin(\widetilde A_{2k+1})$ & Proposition~\ref{prop:antilde} \\
    $M^{2k+2} \cup 2\Delta^2$ & $\artin(\widetilde A_{2k+1})$ & Proposition~\ref{prop:antilde} \\
    \bottomrule
  \end{tabular}
  \bigskip
  \caption{%
    Various surfaces $S$ obtained from regular neighbourhoods
    of a circuit $\alpha_1, \dots, \alpha_n$ by gluing in discs.
    In this table, $n \geq 3$ and $k \geq 1$.
    Moreover, ``not Artin'' is short for
    ``not geometrically isomorphic to an Artin group''.}%
  \label{tab:summary}
\end{table}

\subsection{Extending the cross-involution}\label{ssec:extending}
It turns out that for certain surfaces $S$ containing
a neighbourhood of $\alpha_1, \dots, \alpha_{n}$,
the cross-involution
of the neighbourhood
extends to $S$.
In these cases, by very similar reasoning as in the proof
of Theorem~\ref{thm:regular-nbhd}, we are able to determine
the group $\cgroup(S)$.

\begin{proposition}\label{prop:antilde}
  Let $S$ be a regular neighbourhood
  of a circuit of $n \geq 3$ curves $\alpha_1, \dots \alpha_n$,
  and let $S'$ be
  a $2n$-gon $\Delta^2$ (such a disc does not exist if $n$ is even
  and $S = N^n$),
  or the union $2\Delta^2$ of two $2n$-gons
  (such a union only exists for even~$n$ and $S = M^{n}$).
  Then $\cgroup(S \cup S')$
  is geometrically isomorphic to $\artin(\widetilde A_{n-1})$.
\end{proposition}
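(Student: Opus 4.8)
The plan is to adapt the proof of Theorem~\ref{thm:regular-nbhd} almost verbatim, the extra ingredient being that the cross-involution of the neighbourhood $S$ extends across the glued-in discs. First I would extend $\iota$ to an involution $\tilde\iota$ of $S\cup S'$. Each $2n$-gon boundary component of $S$ that gets capped carries the restriction of $\iota$, which is a fixed-point-free orientation-preserving involution of a circle, hence conjugate to rotation by $\pi$; this extends radially over the capping disc with a single fixed point at the centre. The extension $\tilde\iota$ has finitely many fixed points and still leaves each $\alpha_i$ invariant as a reflected circle, so Lemma~\ref{lem:birman-hilden} applies and yields a homomorphism $\varphi\colon \cgroup(S\cup S')\to\mcg((S\cup S')/\tilde\iota)$ carrying each $T_i$ to a half-twist.

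Next I would identify the quotient. Since $S/\iota$ is the annulus $Z_n$ with $n$ marked points (as established in the proof of Theorem~\ref{thm:regular-nbhd}) and each capped boundary component of $S$ double-covers a boundary circle of $Z_n$, the quotient of each capping disc is a once-marked disc glued along that circle. Hence $(S\cup S')/\tilde\iota$ is $Z_n$ with one boundary circle capped (for $S'=\Delta^2$), which is a disc $\Delta_{n+1}$ with $n+1$ marked points, or with both boundary circles capped (for $S'=2\Delta^2$), which is a sphere with $n+2$ marked points. In either case write $f\colon \mcg(Z_n)\to\mcg((S\cup S')/\tilde\iota)$ for the resulting capping homomorphism.

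As before, the assignment $s_i\mapsto T_i$ defines a surjection $\psi\colon\artin(\widetilde A_{n-1})\to\cgroup(S\cup S')$, well defined because the circuit, and hence the intersection pattern of the $\alpha_i$, is unchanged; it remains to check injectivity. Each half-twist arc lies in $Z_n$ and is disjoint from the capping discs, so $\varphi(T_i)=f(\overline{T_i})$, where the $\overline{T_i}$ are the half-twists produced in the proof of Theorem~\ref{thm:regular-nbhd}. Identifying $\langle\overline{T_i}\rangle$ with $\artin(\widetilde A_{n-1})=\ker(\artin(B_n)\to\mathbb Z)$ via Lemmas~\ref{lem:atilde} and~\ref{lem:annular}, this says $\varphi\circ\psi$ is the restriction of $f$ to the copy of $\artin(\widetilde A_{n-1})$ inside $\mcg(Z_n)$. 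Thus it suffices to show that $f$ is injective on $\artin(\widetilde A_{n-1})$.

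For this I would compute $\ker f$. By Lemma~\ref{lem:inclusion-homo}(i) it is generated by the Dehn twists about the capped boundary circles, so in every case $\ker f\subseteq\langle T_{c_1},T_{c_2}\rangle$, where $c_1,c_2$ are the two (central) boundary circles of $Z_n$. Intersecting with $\artin(\widetilde A_{n-1})$ uses the forgetful map $p\colon\mcg(Z_n)\to\mcg(Z)\cong\mathbb Z$ with kernel $\artin(B_n)$ (Lemma~\ref{lem:annular}) and the winding-number map $w\colon\artin(B_n)\to\mathbb Z$ with kernel $\artin(\widetilde A_{n-1})$ (Lemma~\ref{lem:atilde}): since $T_{c_2}$ maps to a generator under $p$ while $t=T_{c_1}T_{c_2}^{-1}$ has $w(t)=1$, the element $T_{c_1}^aT_{c_2}^b=t^aT_{c_2}^{a+b}$ lies in $\ker p$ only if $a+b=0$ and then in $\ker w$ only if $a=0$, forcing it to be trivial. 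Hence $\langle T_{c_1},T_{c_2}\rangle\cap\artin(\widetilde A_{n-1})=1$, so $f$, and therefore $\psi$, is injective, giving the claimed geometric isomorphism. The main obstacle I anticipate is the topological bookkeeping of the first two steps --- verifying that $\iota$ is fixed-point-free on precisely the capped circles, so that the radial extension is legitimate and contributes exactly one marked point per disc, and that the capped quotients are as claimed --- after which the short computation with $p$ and $w$ finishes the proof.
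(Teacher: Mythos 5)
Your proof is correct and follows the paper's overall strategy --- extend the cross-involution over the capping discs, apply Lemma~\ref{lem:birman-hilden}, and identify the quotient as $\Delta_{n+1}$ (one $2n$-gon) or $\Sigma_{n+2}$ (two $2n$-gons) --- but you handle the injectivity step by a genuinely different route. The paper identifies the image of $\varphi$ with the subgroup of the braid group of the disc (respectively of the sphere) fixing one (respectively two) strands, using the homotopy equivalence of $\Delta$ minus a point, or $\Sigma$ minus two points, with the annulus $Z$; in the sphere case it must additionally argue that the full twist generating the kernel of $\pi_1(C(\Sigma, n+2)) \to \mcg(\Sigma_{n+2})$ causes no collapse. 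You instead factor $\varphi$ through the capping homomorphism $f \colon \mcg(Z_n) \to \mcg((S \cup S')/\tilde\iota)$, read off $\ker f = \langle T_{c_1}, T_{c_2}\rangle$ from Lemma~\ref{lem:inclusion-homo}(i), and verify via the forgetful map $p$ and the exponent-sum map $w$ of Lemma~\ref{lem:atilde} that this kernel meets the copy of $\artin(\widetilde A_{n-1})$ trivially (using that $t = T_{c_1}T_{c_2}^{-1}$ is the image of the extra generator of $\artin(B_n)$, as in the paper's remark on Figure~\ref{fig:generators-cylinder}). This buys a uniform treatment of both cases and replaces the paper's slightly informal full-twist argument with an explicit kernel computation; the points you flag as needing care --- that $\iota$ acts freely on each capped boundary circle, so each disc contributes exactly one marked point, and that $T_{c_1}$, $T_{c_2}$ map as claimed under $p$ and $w$ --- all check out, since the fixed points of the cross-involution are the interior intersection points of consecutive curves.
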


\begin{proof}
  In each case, the cross-involution $\iota$ extends
  to $S \cup S'$
  in a straightforward fashion.
  In the case that $S' = \Delta^2$ is a $2n$-gon,
  $\iota$ gets one additional fixed point, so
  $(S \cup S') / \iota$ is a disc $\Delta_{n+1}$
  with $n+1$ marked points.
  Again, the homomorphism $\varphi \colon \cgroup(S) \to \mcg(\Delta_{n+1})$
  is well-defined by Lemma~\ref{lem:birman-hilden}.
  The images of the $T_i$ under $\varphi$ fix the last puncture.
  Because $\Delta$ minus one point
  is homotopy equivalent to the annulus $Z$,
  we have that the subgroup of $\pi_1(C(\Delta, n+1))$
  fixing one strand is isomorphic to
  $\pi_1(C(Z, n))$.
  By Lemmas~\ref{lem:annular} and~\ref{lem:atilde},
  the image of $\varphi$ is isomorphic to $\artin(\widetilde A_{n-1})$.
  Since the inverse homomorphism is well-defined, the result follows
  for this case.

  Similarly, if $S' = 2\Delta^2$,
  we get a well-defined homomorphism
  $\varphi \colon \cgroup(S) \to \mcg(\Sigma_{n + 2})$,
  where $\Sigma_{n+2}$ is a sphere with
  $n + 2$ marked points.
  The images of the $T_i$ fix two of the punctures.
  Because the group of orientation-preserving
  homeomorphisms of $\Sigma$ is not simply connected,
  there is no Birman exact sequence.
  That is,
  we cannot apply Lemma~\ref{lem:birman} directly
  to get a description of $\mcg(\Sigma_{n+2})$.
  However, it is straightforward to
  show that the kernel of the map
  $\pi_1(C(\Sigma, n+ 2)) \to \mcg(\Sigma_{n+2})$
  is generated by the map rotating the $n + 2$
  marked points by a full twist~\cite[Section~9.1]{primer}.
  Because such a full twist cannot be expressed by just $n$
  generators, we have that the image of $\varphi$
  is isomorphic to the subgroup of
  $\pi_1(C(\Sigma, n+2))$ fixing two strands.
  But because $\Sigma$
  minus two points is homotopy equivalent
  to $Z$, we get that the image of $\varphi$ is isomorphic
  to $\artin(\widetilde A_{n-1})$, as desired.
\end{proof}

Recall that for odd $n$ we abbreviate $N_\circlearrowleft^n$
by $N^n$, and for all $n$,
we abbreviate $\Delta_\circlearrowleft^1$ by $\Delta^1$.
Using these conventions allows us to concisely state the following
result which
is essentially equivalent
to a version of
the Birman-Hilden theorem stated
in the book by Farb and Margalit~\cite[Theorem~9.2]{primer}.

\begin{proposition}\label{prop:an}
  For $n \geq 3$, the group $\cgroup(N^n \cup 2\Delta^1)$ is geometrically isomorphic
  to the braid group $\artin(A_{n-1})$ on $n$ strands.
\end{proposition}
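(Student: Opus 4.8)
The plan is to imitate the proofs of Theorem~\ref{thm:regular-nbhd} and Proposition~\ref{prop:antilde}: extend the cross-involution $\iota$ to $S = N^n \cup 2\Delta^1$, identify the quotient $S/\iota$, and push the two Dehn twist relations through the Birman--Hilden homomorphism. The one genuine novelty compared to the affine case is a bookkeeping mismatch: the circuit supplies $n$ Dehn twists, while the target $\artin(A_{n-1})$ needs only $n-1$ standard generators, so part of the work is to see that one of the twists is redundant.

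First I would extend $\iota$. The two discs making up $2\Delta^1$ cap the pair of boundary components of $N^n$ that $\iota$ interchanges: for odd $n$ these are the two $n$-gon boundaries, which \cite[Section~3, as recorded in Subsection~\ref{ssec:building}]{labruere} the text already notes are exchanged by the cross-involution; for even $n$ they are the two $\circlearrowleft$-boundaries, which $\iota$ must again swap, since $\iota$ is orientation-preserving (hence preserves the $\circlearrowleft/\circlearrowright$ labelling) and any extension of $\iota$ over two invariant discs would create extra fixed points. Thus $\iota$ extends over $2\Delta^1$ by swapping the two discs and acquires no new fixed points. Capping the single image boundary of $Z_n = N^n/\iota$ by the image disc turns the annulus into a disc $\Delta_n$ with $n$ marked points. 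I would confirm this by the boundary-count and Riemann--Hurwitz bookkeeping used throughout the section: the double cover $S \to \Delta_n$ branched over $n$ points gives $\chi(S) = 2\cdot 1 - n = 2 - n$, matching $\chi(N^n \cup 2\Delta^1) = -n + 2$, so the branch set is exactly the $n$ original crossing points.

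Next, Lemma~\ref{lem:birman-hilden} supplies a homomorphism $\varphi \colon \cgroup(S) \to \mcg(\Delta_n)$ sending each $T_i$ to the half-twist $H_i$ about the arc $\bar\alpha_i = \alpha_i/\iota$. Tracing the cyclic arrangement of crossing points shows that the $\bar\alpha_i$ are the edges of a convex $n$-gon on the $n$ marked points. By Lemma~\ref{lem:a}, $\mcg(\Delta_n) \cong \artin(A_{n-1})$, and the half-twists $H_1, \dots, H_{n-1}$ about the chain of $n-1$ consecutive edges are precisely the standard generators; hence they generate $\mcg(\Delta_n)$ and $\varphi$ is surjective. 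On the other side I would define $\psi \colon \artin(A_{n-1}) \to \cgroup(S)$ by $\sigma_i \mapsto T_i$ for $i = 1, \dots, n-1$, which is well-defined because $\alpha_1, \dots, \alpha_{n-1}$ form a chain (consecutive curves braid, non-consecutive ones commute). The composite $\varphi \circ \psi$ sends $\sigma_i \mapsto H_i$ and is therefore the isomorphism of Lemma~\ref{lem:a}, forcing $\psi$ to be injective with image $\langle T_1, \dots, T_{n-1}\rangle$.

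The main obstacle — the step that separates this proposition from Theorem~\ref{thm:regular-nbhd} and that upgrades $\psi$ from an embedding to an isomorphism — is to show the extra generator is redundant, i.e.\ $T_n \in \langle T_1, \dots, T_{n-1}\rangle = \operatorname{im}\psi$. I would handle this geometrically. In $\Delta_n$, the word $w = H_{n-1} \cdots H_2$ drags the endpoint $p_2$ of the edge $\bar\alpha_1$ successively to $p_n$ while fixing $p_1$, carrying $\bar\alpha_1$ onto the closing edge $\bar\alpha_n$; hence $H_n = w H_1 w^{-1}$. Since the $\bar\alpha_i$ are the symmetric arcs, this arc-moving identity lifts through the branched cover to $\alpha_n = w'(\alpha_1)$ in $S$, where $w' = T_{n-1} \cdots T_2$, so that $T_n = w' T_1 w'^{-1} \in \operatorname{im}\psi$. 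Then $\cgroup(S) = \langle T_1, \dots, T_n \rangle = \operatorname{im}\psi$, and $\psi$ is a bijective geometric embedding, giving the desired geometric isomorphism $\artin(A_{n-1}) \cong \cgroup(S)$; equivalently, one recognizes $\cgroup(S)$ as the symmetric mapping class group of the hyperelliptic cover, so that the whole statement repackages \cite[Theorem~9.2]{primer}. The most delicate part of this last paragraph is verifying that the arc relation downstairs lifts to a genuine isotopy of curves upstairs, rather than merely up to the deck transformation $\iota$; I expect this to be where the argument requires the most care.
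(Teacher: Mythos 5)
Your argument is correct and follows the paper's route exactly: extend the cross-involution over the two discs with no new fixed points, identify the quotient as $\Delta_n$, and apply Lemmas~\ref{lem:birman-hilden} and~\ref{lem:a} to see that the $T_i$ map to half-twists generating $\mcg(\Delta_n) \cong \artin(A_{n-1})$. The only difference is one of detail: the paper stops after observing that the image of $\varphi$ is $\artin(A_{n-1})$, leaning on the cited version of the Birman--Hilden theorem~\cite[Theorem~9.2]{primer} for the rest, whereas you make the inverse explicit via $\psi$ and the redundancy $T_n = T_{n-1}\cdots T_2\, T_1\, {(T_{n-1}\cdots T_2)}^{-1}$ --- a relation consistent with the remark following the proposition in the paper.
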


\begin{proof}
  Let us write $S = N^n \cup 2\Delta^1$
  The cross-involution $\iota$ extends
  to $N^n \cup 2\Delta^1$ with no additional fixed points.
  This yields a well-defined homomorphism
  $\varphi \colon \cgroup(S) \to \mcg(\Delta_n)$
  mapping the $T_i$ to half-twists
  by Lemma~\ref{lem:birman-hilden}.
  Since these half-twists generate $\mcg(\Delta_n)$,
  Lemma~\ref{lem:a} yields that
  the image of $\varphi$ is isomorphic
  to $\artin(A_{n-1})$.
\end{proof}

\begin{remark}
  It is not difficult to show that the kernel
  of the inclusion-induced homomorphism
  $\cgroup(N^n) \to \cgroup(N^n \cup 2\Delta^1)$
  is normally
  generated by the relation $T_n \cdots T_2 = T_{n-1} \cdots T_1$.
  One possible strategy is to explicitly compute the kernel of
  the inclusion-induced
  homomorphism $\pi_1(C(Z, n)) \to \pi_1(C(\Delta, n))$.
\end{remark}

\subsection{The cycle relation}\label{ssec:cycles}
The surface the current subsection is about
is $N^n \cup \Delta^1$.
Luckily for us, this case has almost entirely been
solved by Labruère, and the rest can be extracted
from work by Baader and Lönne.

If the circuit $\alpha_1, \dots, \alpha_n$ bounds an $n$-gon
$\Delta^1_{\circlearrowleft}$
or $\Delta^1_{\circlearrowright}$,
we will say that $\alpha_1, \dots, \alpha_n$
form a \emph{cycle}.
The \emph{standard cycle relation} between the Dehn twists
$T_1, \dots, T_n$ is
\(
  T_n \cdots T_1 T_n \cdots T_3 = T_{n-1} \cdots T_1 T_n \cdots T_2.
\)
One can show that this relation is equivalent to the commutation relation
$T_1 f = f T_1$ where
$f = (T_n \cdots T_3) T_2 {(T_n \cdots T_3)}^{-1}$~\cite[Section~1]{baader-loenne}.
Using this representation of the standard cycle relation it becomes
a routine task to verify that the standard cycle relation holds
in the surface $N^n \cup \Delta^1$.
Similarly, the \emph{reverse cycle relation}
\(
  T_1 \cdots T_n T_1 \cdots T_{n-2} = T_{2} \cdots T_n T_1 \cdots T_{n-1}.
\)
holds in the surface $N^n_{\circlearrowright} \cup \Delta^1_{\circlearrowright}$.
But Labruère made an even stronger observation.

\begin{lemma}[{\cite[Proposition~2]{labruere}}]\label{lem:labruere}
  The kernel of the homomorphism $\artin(\widetilde A_{n-1})
  \to \cgroup(N^n \cup \Delta^1)$ mapping the standard generators $s_i$
  to $T_i$ is normally generated by the cycle relation.
\end{lemma}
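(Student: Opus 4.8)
The plan is to factor the homomorphism $\artin(\widetilde A_{n-1}) \to \cgroup(N^n \cup \Delta^1)$ through the quotient $Q = \artin(\widetilde A_{n-1})/\langle\langle c \rangle\rangle$ by the normal closure of the cycle relation $c$, and then to show that the induced map $Q \to \cgroup(N^n \cup \Delta^1)$ is an isomorphism. First I would check that $c$ genuinely lies in the kernel, so that $Q$ is defined and the factorization exists. Using the Baader--Lönne reformulation~\cite[Section~1]{baader-loenne}, the cycle relation is equivalent to the commutation $T_1 f = f T_1$ with $f = (T_n \cdots T_3) T_2 (T_n \cdots T_3)^{-1}$, i.e.\ to the disjointness up to isotopy of $\alpha_1$ and $(T_n \cdots T_3)(\alpha_2)$. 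In $N^n$ these two curves meet, but capping off the $n$-gon with $\Delta^1$ produces a bigon that can be removed, making them disjoint; hence the two Dehn twists commute in $\cgroup(N^n \cup \Delta^1)$ and $c$ lies in the kernel.

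The core of the argument is an abstract identification $Q \cong \artin(D_n)$. Starting from the cyclic presentation of $\artin(\widetilde A_{n-1})$ on generators $s_1, \dots, s_n$ (braid relations between cyclically adjacent generators, commutation otherwise) together with $c$, I would perform Tietze transformations so that the cycle relation lets one of the closing edges of the $n$-cycle be traded for the fork of the $D_n$ diagram, arriving at the standard presentation of $\artin(D_n)$. I expect this presentation manipulation to be the main obstacle: the cycle relation is long, and one has to track carefully how conjugation by $T_n \cdots T_3$ rewrites the relevant generators so as to expose the degree-three vertex of $D_n$, without inadvertently imposing extra relations that would collapse the group further.

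With $Q \cong \artin(D_n)$ in hand, I would close the argument using that $\cgroup(N^n \cup \Delta^1) \cong \artin(D_n)$ by Proposition~\ref{prop:dn}. The factored map $\bar p \colon Q \to \cgroup(N^n \cup \Delta^1)$ is surjective, since the $T_i$ generate $\cgroup(N^n \cup \Delta^1)$ by definition, so composing with the two identifications yields a surjective endomorphism of $\artin(D_n)$. Finite-type Artin groups are linear, hence residually finite and therefore Hopfian, so this surjection must be injective. Consequently $\bar p$ is an isomorphism, whence the kernel of $\artin(\widetilde A_{n-1}) \to \cgroup(N^n \cup \Delta^1)$ is exactly $\langle\langle c \rangle\rangle$, i.e.\ the cycle relation normally generates it. As the statement records, this recovers Labruère's~\cite[Proposition~2]{labruere}; the reconstruction above merely isolates where the topology (step one) and where the pure group theory (steps two and three) each enter.
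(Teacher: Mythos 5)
The paper does not actually prove this statement; it is imported verbatim from Labru\`ere's Proposition~2, so there is no in-paper argument to compare yours against. Judged on its own merits, your outline has a reasonable architecture but two genuine problems. First, the heart of the matter---the abstract identification of $Q = \artin(\widetilde A_{n-1})/\langle\langle c\rangle\rangle$ with $\artin(D_n)$ by Tietze transformations---is precisely the nontrivial combinatorial content of Labru\`ere's result, and you only announce that you would do it while conceding it is ``the main obstacle''. As written, the central step is asserted, not carried out: each side of the cycle relation is a word of length $2n-2$, and trading a closing edge of the $\widetilde A_{n-1}$ cycle for the fork of the $D_n$ diagram requires introducing the new generator $(T_n\cdots T_3)^{-1}T_1(T_n\cdots T_3)$, verifying all of its commutation and braid relations with $s_2,\dots,s_n$, and---the delicate part---showing that no relations beyond the $D_n$ ones survive the rewriting. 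Until that computation is done, the proof is incomplete at its core.

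Second, your closing step is circular relative to this paper's logical structure: you invoke Proposition~\ref{prop:dn} (that $\cgroup(N^n\cup\Delta^1)\cong\artin(D_n)$), but the paper proves that proposition \emph{using} Lemma~\ref{lem:labruere}. To make the Hopfian argument legitimate you need an independent source for the isomorphism $\cgroup(N^n\cup\Delta^1)\cong\artin(D_n)$---for instance Perron--Vannier's theorem that Dehn twists along a $D_n$-configuration of curves generate $\artin(D_n)$, applied to the curves $(T_n\cdots T_3)^{-1}(\alpha_1),\alpha_2,\dots,\alpha_n$ after checking that these really form a $D_n$ pattern in $N^n\cup\Delta^1$. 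With that substitution, and with the Tietze computation actually performed, the rest of your plan is sound: the bigon argument showing that $c$ lies in the kernel matches the discussion around Figure~\ref{fig:bigon}, and a surjective endomorphism of a finite-type Artin group is an isomorphism by residual finiteness, as in Lemma~\ref{lem:hopfian}.
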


\begin{proposition}\label{prop:dn}
  For $n \geq 3$,
  the group $\cgroup(N^n \cup \Delta^1)$
  is geometrically isomorphic to $\artin(D_n)$.
\end{proposition}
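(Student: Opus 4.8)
The plan is to begin from the presentation of $\cgroup(N^n \cup \Delta^1)$ furnished by Labruère and then recognise the resulting group as $\artin(D_n)$. Concretely, Lemma~\ref{lem:labruere} says that the surjection $\artin(\widetilde A_{n-1}) \to \cgroup(N^n \cup \Delta^1)$ sending $s_i \mapsto T_i$ has kernel normally generated by the cycle relation, so that
\[
  \cgroup(N^n \cup \Delta^1) \cong \langle s_1, \dots, s_n \mid \text{$\widetilde A_{n-1}$-relations}, \ R \rangle,
\]
where $R$ is the standard cycle relation. By the reformulation recorded above (following Baader--Lönne), $R$ is equivalent, modulo the Artin relations of $\widetilde A_{n-1}$, to the single commutation relation $[T_1, f] = 1$, where $f = (T_n \cdots T_3) T_2 (T_n \cdots T_3)^{-1}$ is the Dehn twist about $\gamma = (T_n \cdots T_3)(\alpha_2)$. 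It thus suffices to identify the abstract group $\langle s_1, \dots, s_n \mid \widetilde A_{n-1}\text{-relations}, \ [s_1, f] = 1 \rangle$ with $\artin(D_n)$ by an isomorphism carrying standard generators to Dehn twists.

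To carry this out I would exhibit $n$ curves whose Dehn twists intersect in the $D_n$ pattern and still generate $\cgroup(N^n \cup \Delta^1)$. The natural candidates are $\alpha_1, \dots, \alpha_{n-1}$ together with the new curve $\gamma$: the relation $[T_1, f] = 1$ records that $\gamma$ and $\alpha_1$ are disjoint, and $\gamma$, being a push-off of $\alpha_2$ along the chain, should braid with exactly one of the remaining $\alpha_i$ while commuting with the others, thereby producing the trivalent vertex of $D_n$. Translating into the presentation, I would perform Tietze transformations replacing the cyclic generating set by these $n$ elements and check that the relations collapse to precisely the braid and commutation relations of $D_n$; since every new generator is a conjugate of some $T_j$, it is a Dehn twist, and this is what makes the isomorphism geometric. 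As a consistency check that such a configuration exists, I note that the Euler characteristic and the number of boundary components of $N^n \cup \Delta^1$ agree with those of a regular neighbourhood of a $D_n$ configuration.

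Finally, to promote the resulting surjection $\artin(D_n) \to \cgroup(N^n \cup \Delta^1)$ to an isomorphism, I would invoke the geometric embedding of $\artin(D_n)$ due to Perron--Vannier: the composite $\artin(D_n) \to \cgroup(N^n \cup \Delta^1) \hookrightarrow \mcg(N^n \cup \Delta^1)$ realises a geometric $D_n$ configuration and is therefore injective, which forces the first map to be an isomorphism. The step I expect to be the main obstacle is precisely the middle one: pinning down the correct $D_n$ configuration, verifying that the chosen $n$ Dehn twists satisfy exactly the $D_n$ relations and no others, and --- most delicately --- confirming that they still generate all of $\cgroup(N^n \cup \Delta^1)$, so that the exhibited homomorphism from $\artin(D_n)$ is genuinely onto. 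It is here that Labruère's exact computation of the kernel is indispensable, since it guarantees that no relation beyond the cycle relation can survive in the quotient.
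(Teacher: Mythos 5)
Your proposal rests on the same two pillars as the paper's proof: Labruère's computation of the kernel (Lemma~\ref{lem:labruere}) and an explicit change of generators in which one of the $T_i$ is replaced by a conjugate Dehn twist, so that the quotient of $\artin(\widetilde A_{n-1})$ by the cycle relation is recognised as $\artin(D_n)$. (The paper conjugates $T_1$, taking $s_1 \mapsto (T_n \cdots T_3)^{-1} T_1 (T_n \cdots T_3)$ and $s_i \mapsto T_i$ for $i \geq 2$; you conjugate $T_2$ instead, which is the same move up to conjugating the whole relation.) Where you diverge is the injectivity step. The paper closes the loop purely algebraically: it writes down the inverse homomorphism $\cgroup(N^n \cup \Delta^1) \to \artin(D_n)$, which is well defined precisely because Lemma~\ref{lem:labruere} says the only relations to check are the $\widetilde A_{n-1}$ relations and the cycle relation. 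You instead invoke Perron--Vannier's faithfulness result for the geometric $D_n$ representation. That route is legitimate but costs you an extra geometric verification that you only gesture at: Perron--Vannier applies to the regular neighbourhood of a $D_n$ configuration, so you must check (i) that $\gamma = (T_n\cdots T_3)(\alpha_2)$ really intersects $\alpha_2$ once and is disjoint from $\alpha_1, \alpha_3, \dots, \alpha_{n-1}$ (you write ``should braid with exactly one''), and (ii) that this configuration fills $N^n \cup \Delta^1$, i.e.\ that a regular neighbourhood of it is isotopic to the whole surface --- your Euler characteristic and boundary count is the right idea for (ii), provided the numbers are actually computed, but as stated it is an unverified assertion. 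If you are willing to carry out the Tietze transformations completely, you get the $D_n$ presentation on the nose and the Perron--Vannier appeal becomes unnecessary; that is in effect what the paper's one-line verification amounts to.
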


\begin{proof}
  Let $s_1, \dots, s_n$ be the standard generators of
  $\artin(D_n)$ read from left to right
  in Figure~\ref{fig:dynkin}.
  Using Lemma~\ref{lem:labruere}, one can verify that
  an explicit isomorphism $\artin(D_n) \to \cgroup(N^n \cup \Delta^1)$
  is given by $s_1 \mapsto {(T_n \cdots T_3)}^{-1} T_1 (T_n \cdots T_3)$
  and $s_i \mapsto T_i$ for $i \geq 2$.
\end{proof}

\begin{remark}
  Baader and Lönne prove the considerably more general
  but also less easily digested result that
  the secondary braid group is invariant via a
  geometric isomorphism under
  elementary conjugation~\cite[Section~4]{baader-loenne}.
  Indeed, by Lemma~\ref{lem:labruere},
  the group $\cgroup(N^n \cup \Delta^1)$ is geometrically
  isomorphic to
  the secondary braid group~\cite[Definition~1]{baader-loenne}
  associated to the positive braid word
  $\sigma_1 \sigma_2 \sigma_1^{n-2} \sigma_2 \sigma_1$,
  whereas $\artin(D_n)$ is
  geometrically isomorphic to the group associated to
  $\sigma_1^2 \sigma_2 \sigma_1^{n-2} \sigma_2$.
\end{remark}

\subsection{Inhomogeneous relations}\label{ssec:inhomogeneous}
Sadly, we do not manage to compute the remaining groups $\cgroup(S)$
up to isomorphism.
We will, however, get to know the groups well enough
to exclude the possibility of them being geometrically isomorphic
to an Artin group.

A relation $t = t'$ is called \emph{homogeneous}
if the exponent sums of $t$ and $t'$ agree.
Otherwise, the relation $t = t'$ is called \emph{inhomogeneous}.
The strategy in the current subsection will be to
find inhomogeneous relations in $\cgroup(S)$.
The following elementary result allows us to conclude
that $\cgroup(S)$ is not geometrically isomorphic to an Artin group.

\begin{lemma}\label{lem:inhomogeneous}
  If $\cgroup(S)$ has an inhomogeneous relation,
  then $\cgroup(S)$ is not
  geometrically isomorphic to
  an Artin group.
\end{lemma}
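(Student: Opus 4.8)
The plan is to exploit the fact that every Artin group carries a canonical \emph{length homomorphism} to $\mathbb{Z}$ and to transport it to $\cgroup(S)$. For any graph $\Gamma$ there is a homomorphism $\ell \colon \artin(\Gamma) \to \mathbb{Z}$ sending each standard generator to $1$, and it is well defined precisely because every defining relation $sts\cdots = tst\cdots$ of an Artin group is homogeneous, both sides having length $n_{st}+2$. Since $\ell$ sends a generator to $1$ it is surjective. The goal is to pull it back to a surjection $\psi \colon \cgroup(S) \to \mathbb{Z}$ and to show that $\psi(T_i)$ is one and the same nonzero integer for every $i$; this will then be incompatible with the existence of an inhomogeneous relation.

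The key observation making this work is that all the Dehn twists $T_i$ are conjugate to one another \emph{inside} $\cgroup(S)$. Indeed, consecutive curves $\alpha_i$ and $\alpha_{i+1}$ of the circuit intersect exactly once, so $T_i T_{i+1}(\alpha_i) = \alpha_{i+1}$ up to isotopy, whence $(T_i T_{i+1}) T_i (T_i T_{i+1})^{-1} = T_{i+1}$, with conjugator $T_i T_{i+1} \in \cgroup(S)$. Because the indices of a circuit form a single cycle, this chains together to show that $T_1, \dots, T_n$ are all conjugate in $\cgroup(S)$. As conjugate elements have equal image under any homomorphism to an abelian group, we get $\psi(T_1) = \cdots = \psi(T_n) =: a$ for a single integer $a$.

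Now I would assemble the pieces. Suppose for contradiction that $\cgroup(S)$ is geometrically isomorphic to some $\artin(\Gamma)$; by the definition of geometric isomorphism (a bijective geometric embedding or its inverse) one obtains in either case an abstract isomorphism $\theta \colon \cgroup(S) \to \artin(\Gamma)$, and I set $\psi = \ell \circ \theta$. Since the $T_i$ generate $\cgroup(S)$ and all have image $a$, the image of $\psi$ is $a\mathbb{Z}$; surjectivity of $\psi$ forces $a = \pm 1$, in particular $a \neq 0$. Finally, let $w = w'$ be the given inhomogeneous relation, with total exponent sums $e(w) \neq e(w')$. Applying $\psi$ yields $a\,e(w) = \psi(w) = \psi(w') = a\,e(w')$, and cancelling the nonzero $a$ gives $e(w) = e(w')$, a contradiction.

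The only genuinely substantive step is the conjugacy claim, that is, verifying the standard one-intersection change-of-coordinates identity $(T_i T_{i+1}) T_i (T_i T_{i+1})^{-1} = T_{i+1}$ and noting that the conjugator lies in $\cgroup(S)$; everything else is formal. I should take care that the definition of geometric isomorphism is invoked correctly in both directions, but the argument deliberately needs only the resulting \emph{abstract} isomorphism $\cgroup(S) \cong \artin(\Gamma)$ — it never requires the images of the standard generators of $\artin(\Gamma)$ to be the specific twists $T_i$, which is what makes the proof robust.
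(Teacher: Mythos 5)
Your proof is correct and follows essentially the same route as the paper: the paper's (contrapositive, one-line) argument is exactly that the length homomorphism $\artin(\Gamma) \to \mathbb{Z}$ sending every standard generator to $1$ is well defined because all Artin relations are homogeneous. Your extra step --- using $(T_i T_{i+1}) T_i (T_i T_{i+1})^{-1} = T_{i+1}$ to show the $T_i$ are all conjugate in $\cgroup(S)$, so the pulled-back homomorphism takes a single nonzero value on them even if the geometric isomorphism does not match the standard generators with the $T_i$ --- is a legitimate refinement of a point the paper leaves implicit, not a different approach.
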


\begin{proof}
  We argue contrapositively: The map sending each generator
  of an Artin group $\artin(\Gamma)$ to one
  extends to a homomorphism $\artin(\Gamma) \to \mathbb{Z}$
  because all relations in $\artin(\Gamma)$ are homogeneous,
  and hence also hold in $\mathbb{Z}$.
\end{proof}

An effective way to produce inhomogeneous relations
is to apply the classical result called
the chain relation.
Recall that a \emph{chain} of curves
is a family $\alpha_1, \dots, \alpha_n$ of $n$ curves
such that $\alpha_i$ intersects $\alpha_{j}$
exactly once if $j = i \pm 1$ and zero times otherwise.

\begin{lemma}[{\cite[Proposition~4.12]{primer}}]\label{lem:chain-relation}
  Let $\alpha_1, \dots, \alpha_n$ be a chain of $n$ curves,
  and let $T_i$ be the Dehn twist about $\alpha_i$.
  If $n$ is even, let $\beta$ be the boundary curve of a regular
  neighbourhood of $\alpha_1 \cup \cdots \cup \alpha_n$.
  Similarly, if $n$ is odd, let $\beta_1, \beta_2$ be the two
  boundary curves.
  Then:
  \begin{enumerate}[\normalfont(i)]
    \itemsep0em
    \item If $n$ is even, then ${(T_n \cdots T_1)}^{2n+2} = T_\beta$.
    \item If $n$ is odd, then ${(T_n \cdots T_1)}^{n+1} = T_{\beta_1} T_{\beta_2}$.
  \end{enumerate}
\end{lemma}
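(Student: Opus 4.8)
The plan is to realise the regular neighbourhood of the chain as a double cover of a disc branched over $n+1$ points and to transport the classical full-twist relation of the braid group across this cover using Birman--Hilden theory. Concretely, I would start with a disc $\Delta$ carrying $n+1$ marked points $p_0, \dots, p_n$ placed on a line segment, and take the double cover $S \to \Delta$ branched over exactly these points. The preimage of the sub-arc joining $p_{i-1}$ to $p_i$ is a single simple closed curve $\alpha_i$; since consecutive arcs share exactly one endpoint while non-consecutive arcs are disjoint, the curves $\alpha_1, \dots, \alpha_n$ form a chain. A Riemann--Hurwitz computation gives $\chi(S) = 2 - (n+1) = 1-n$, and the monodromy of the cover around $\partial \Delta$ is the product of the $n+1$ branch-point transpositions in $\mathbb{Z}/2$, hence nontrivial exactly when $n$ is even. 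Thus $S$ has a single boundary component $\beta$ when $n$ is even and two boundary components $\beta_1, \beta_2$ when $n$ is odd, and in each case $S$ is a regular neighbourhood of the chain.

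Next I would invoke the Birman--Hilden theorem, which identifies the group of symmetric mapping classes of $S$ with $\mcg(\Delta_{n+1})$ (the content of Lemma~\ref{lem:birman-hilden} together with its injectivity): the deck transformation reverses each $\alpha_i$, and the induced homomorphism sends each Dehn twist $T_i$ to the half-twist $\sigma_i$ exchanging $p_{i-1}$ and $p_i$. By Lemma~\ref{lem:a} these half-twists are the standard generators of $\mcg(\Delta_{n+1}) \cong \artin(A_n)$, so I may import the classical identity $(\sigma_1 \cdots \sigma_n)^{n+1} = T_{\partial \Delta}$ expressing the full twist (the generator of the centre of the braid group) as the boundary Dehn twist. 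Both sides of the relation I wish to prove are products of the symmetric twists $T_i$ and $T_\beta$ (or $T_{\beta_1}, T_{\beta_2}$), hence are themselves symmetric, so it suffices to verify that the two sides have the same image under the Birman--Hilden isomorphism, i.e. project to the same mapping class downstairs. The symmetry of $T_n \cdots T_1$ and $T_1 \cdots T_n$ also makes the two orderings interchangeable.

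The crux, and the step I expect to require the most care, is computing how the boundary twist lifts, since this is precisely where the parity-dependent exponent is born. Restricting the cover to an annular neighbourhood of $\partial \Delta$ and writing the Dehn twist as $(z,t) \mapsto (z\, e^{2\pi i t}, t)$, I would solve for its symmetric lift. When $n$ is odd the boundary lifts to two disjoint copies each mapping homeomorphically, and $T_{\partial \Delta}$ lifts to $T_{\beta_1} T_{\beta_2}$, yielding $(T_n \cdots T_1)^{n+1} = T_{\beta_1} T_{\beta_2}$. When $n$ is even the boundary lifts to a single curve $\beta$ wrapping twice around $\partial \Delta$; here the equation $w(t)^2 = e^{2\pi i t}$ forces $w(1) = -1$, so a single twist downstairs admits no boundary-fixing lift, while its square does, lifting to the full twist $T_\beta$. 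Consequently $T_\beta$ projects to $T_{\partial \Delta}^2 = (\sigma_1 \cdots \sigma_n)^{2n+2}$, giving $(T_n \cdots T_1)^{2n+2} = T_\beta$. Making this ``one twist downstairs is half a twist upstairs'' bookkeeping rigorous is the only genuinely delicate point; everything else follows from the results already assembled above.
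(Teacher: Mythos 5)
The paper does not prove this lemma at all; it is quoted verbatim from Farb--Margalit \cite[Proposition~4.12]{primer}, so there is no in-paper argument to compare against. Your proposal is essentially a correct reconstruction of the standard proof given in the cited source (Section~9.4.1 of the Primer): realise the regular neighbourhood of the chain as the hyperelliptic double cover of a disc branched over $n+1$ points, push the relation down to the braid group $\mcg(\Delta_{n+1}) \cong \artin(A_n)$, use the identity $(\sigma_1\cdots\sigma_n)^{n+1} = T_{\partial\Delta}$ for the full twist, and account for the parity of $n$ through the behaviour of the cover over a collar of $\partial\Delta$. Your Euler characteristic and monodromy computations, and the ``one twist downstairs is half a twist upstairs'' analysis in the even case, are all correct.

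The one point you should be more careful about is the injectivity you invoke. Lemma~\ref{lem:birman-hilden} as stated in the paper only produces a well-defined homomorphism $\cgroup(S) \to \mcg(S/\iota)$; it asserts nothing about injectivity, and indeed elsewhere in the paper (e.g.\ the proof of Theorem~\ref{thm:regular-nbhd}) injectivity has to be established separately by exhibiting an inverse. Your argument genuinely needs that two symmetric mapping classes of $S$ with the same image in $\mcg(\Delta_{n+1})$ are equal, since you compare $T_\beta$ (respectively $T_{\beta_1}T_{\beta_2}$) with $(T_n\cdots T_1)^{2n+2}$ (respectively $(T_n\cdots T_1)^{n+1}$) only after projecting. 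This is true for the hyperelliptic cover of the disc --- it is the isomorphism $\mathrm{SMod}(S) \cong \artin(A_n)$ of \cite[Theorem~9.2]{primer}, which the paper itself alludes to before Proposition~\ref{prop:an} --- but it should be cited as such rather than attributed to Lemma~\ref{lem:birman-hilden}. With that reference supplied, the proof is complete and coincides with the one in the literature.
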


\begin{proposition}\label{prop:odd-closed}
  For odd $n \geq 3$,
  the relation ${(T_{n-1} \cdots T_{1})}^{2n} = 1$
  holds in
  the group $\cgroup(N^n \cup 2\Delta^1 \cup \Delta^2)$.
  In particular, it
  is not geometrically isomorphic to an Artin group.
\end{proposition}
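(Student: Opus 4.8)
Let me understand what surface we're dealing with. We have $N^n$ (the regular neighbourhood of a circuit of odd length $n$), and we glue in $2\Delta^1$ (two discs capping off the two boundary components that are $n$-gons) and $\Delta^2$ (a disc capping the third boundary, a $2n$-gon). After these three discs are glued in, the circuit $\alpha_1, \dots, \alpha_n$ bounds embedded closed discs and the surface is closed (a sphere). The claim is that $(T_{n-1} \cdots T_1)^{2n} = 1$ holds, which is visibly inhomogeneous (left side has positive exponent sum $2n(n-1)$, right side $0$), so Lemma~\ref{lem:inhomogeneous} finishes the "not Artin" conclusion immediately. So the entire content is to establish the relation.

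**Key steps.**
My strategy is to exploit the chain relation (Lemma~\ref{lem:chain-relation}). First I would observe that the curves $\alpha_1, \dots, \alpha_{n-1}$ form a chain of even length $n-1$ (since $n$ is odd). By part (i) of the chain relation, $(T_{n-1} \cdots T_1)^{2(n-1)+2} = (T_{n-1} \cdots T_1)^{2n} = T_\beta$, where $\beta$ is the boundary curve of a regular neighbourhood of $\alpha_1 \cup \cdots \cup \alpha_{n-1}$. The entire game is then to identify $\beta$ inside the fully-capped surface $N^n \cup 2\Delta^1 \cup \Delta^2$ and argue that $T_\beta = 1$, i.e., that $\beta$ bounds a disc (or is null-homotopic) in this surface. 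The decisive geometric point is that once we have glued in all three discs, the curve $\beta$ — which in the bare neighbourhood of the chain is a genuine boundary curve — becomes bounding: the discs $2\Delta^1$ and $\Delta^2$ together cap off the region that $\beta$ encircles, so that $\beta$ becomes null-homotopic and its Dehn twist is trivial in $\mcg(N^n \cup 2\Delta^1 \cup \Delta^2)$.

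**The main obstacle.**
The hard part is carefully verifying that $\beta$, the boundary of the regular neighbourhood of the chain $\alpha_1 \cup \cdots \cup \alpha_{n-1}$, actually bounds a disc once all three discs have been glued in. This is fundamentally a topological bookkeeping argument about which region of the surface each disc fills and how $\beta$ sits relative to $\alpha_n$ and the three capped boundary components. I would approach it by using the cross-involution perspective: under the involution $\iota$ the surface $N^n \cup 2\Delta^1 \cup \Delta^2$ is a double branched cover of a sphere, and I can track $\beta$ and its image downstairs. Alternatively, and perhaps more cleanly, one can consider the missing curve $\alpha_n$: the chain $\alpha_1, \dots, \alpha_{n-1}$ lives inside the full circuit, and the complementary region cut out by $\beta$ is precisely what the discs fill. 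I expect the cleanest writeup to proceed by showing that in $N^n \cup 2\Delta^1 \cup \Delta^2$ the subsurface to one side of $\beta$ is a disc, so $T_\beta$ is trivial; combined with the chain relation this yields $(T_{n-1}\cdots T_1)^{2n} = T_\beta = 1$, and Lemma~\ref{lem:inhomogeneous} completes the proof.
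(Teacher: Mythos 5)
Your proposal takes essentially the same route as the paper: apply the chain relation (Lemma~\ref{lem:chain-relation}(i)) to the even-length chain $\alpha_1,\dots,\alpha_{n-1}$ to obtain $(T_{n-1}\cdots T_1)^{2n}=T_\beta$, observe that $\beta$ becomes null-homotopic once the discs are glued in, and conclude with Lemma~\ref{lem:inhomogeneous}. One incidental slip: the capped surface is a closed surface of genus $(n-1)/2$, not a sphere, but this does not affect the argument, since the complement of the chain's regular neighbourhood still has Euler characteristic $1$ and a single boundary component and is therefore a disc.
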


\begin{proof}
  Note that the boundary of
  a regular neighbourhood of the chain $\alpha_1, \dots, \alpha_{n-1}$
  is null-homotopic.
  The proposition now follows from Lemma~\ref{lem:chain-relation}
  and Lemma~\ref{lem:inhomogeneous}.
\end{proof}

\begin{proposition}\label{prop:not-artin6}
  The group $\cgroup(N^n \cup \Delta^1_\circlearrowleft \cup \Delta^1_\circlearrowright)$
  is not geometrically isomorphic to an Artin group
  if $n \geq 6$, and neither is $\cgroup(S)$ for any supersurface
  $S$ of $N^n \cup \Delta^1_\circlearrowleft \cup \Delta^1_\circlearrowright$.
\end{proposition}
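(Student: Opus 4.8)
The plan is to produce an inhomogeneous relation in $\cgroup(N^n \cup \Delta^1_\circlearrowleft \cup \Delta^1_\circlearrowright)$ and then invoke Lemma~\ref{lem:inhomogeneous}, exactly in the spirit of the proof of Proposition~\ref{prop:odd-closed}. First I would dispose of the ``supersurface'' clause: if $S$ contains $N^n \cup \Delta^1_\circlearrowleft \cup \Delta^1_\circlearrowright$, the inclusion-induced homomorphism $\cgroup(N^n \cup \Delta^1_\circlearrowleft \cup \Delta^1_\circlearrowright) \to \cgroup(S)$ sends each $T_i$ to $T_i$, so any relation among the $T_i$ valid in the smaller group remains valid in $\cgroup(S)$. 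In particular an inhomogeneous relation persists, and $\cgroup(S)$ is again not geometrically isomorphic to an Artin group. Thus it suffices to treat the base surface $N^n \cup \Delta^1_\circlearrowleft \cup \Delta^1_\circlearrowright$.

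To manufacture the relation, I would apply the chain relation (Lemma~\ref{lem:chain-relation}) to two nested sub-chains of the circuit of different odd lengths, say $\alpha_1, \dots, \alpha_{n-1}$ and $\alpha_1, \dots, \alpha_{n-3}$ (both odd, since $n$ is even). The key geometric claim is that, once both the $\circlearrowleft$- and the $\circlearrowright$-disc have been glued in, the regular neighbourhood of each of these chains has one boundary curve that bounds an embedded disc in $S$ --- so that the corresponding boundary twist is trivial --- while its other boundary curve is isotopic to one and the same boundary component $\delta$ of $S$. Note that both curves cannot simultaneously bound discs, since capping both would embed a closed genus-$\tfrac{n-2}{2}$ surface in the bounded surface $S$; so exactly one does, and this is what the claim asserts. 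Granting it, Lemma~\ref{lem:chain-relation}(ii) turns into the two identities $(T_{n-1}\cdots T_1)^{n} = T_\delta$ and $(T_{n-3}\cdots T_1)^{n-2} = T_\delta$ in $\cgroup(S)$. Equating the two left-hand sides yields a relation purely among the $T_i$ whose two sides have exponent sums $n(n-1)$ and $(n-2)(n-3)$; since these differ by $4n - 6 \neq 0$, the relation is inhomogeneous, and Lemma~\ref{lem:inhomogeneous} finishes the proof.

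The main obstacle is precisely the geometric claim about the two boundary curves of each chain neighbourhood: that one is null-homotopic and the other is isotopic to the fixed boundary component $\delta$. I would verify this by an Euler-characteristic and genus count for the complement of each chain neighbourhood in $S$ (using that $S$ has genus $\tfrac{n-2}{2}$ and two boundary components), supplemented by explicitly tracking how the $n$-gon boundaries of $\Delta^1_\circlearrowleft$ and $\Delta^1_\circlearrowright$ absorb the arcs of the omitted curves $\alpha_{n}$, respectively $\alpha_{n-2}, \alpha_{n-1}, \alpha_n$; this is where the drawings of Figures~\ref{fig:nbhds} and~\ref{fig:surface-variants-even} become indispensable, and it is the step most prone to error. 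This analysis is also the source of the hypothesis $n \geq 6$: the shorter chain $\alpha_1, \dots, \alpha_{n-3}$ only exhibits the required disc-bounding behaviour once it has length at least three, that is $n - 3 \geq 3$, whereas for $n = 4$ it degenerates to the single curve $\alpha_1$ and the chain relation becomes vacuous, so no inhomogeneity arises --- consistent with $\cgroup(N^4 \cup \Delta^1_\circlearrowleft \cup \Delta^1_\circlearrowright)$ being the genuine Artin group $\artin(A_3)$ of Proposition~\ref{prop:4}.
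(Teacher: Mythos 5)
Your skeleton --- manufacture an inhomogeneous relation from Lemma~\ref{lem:chain-relation} and conclude with Lemma~\ref{lem:inhomogeneous} --- is exactly the paper's, and your reduction to the base surface via the inclusion-induced homomorphism is fine. But the geometric claim that carries your whole argument is false, and one does not even need the figures to see it. The regular neighbourhood $\nu$ of an odd chain has exactly two boundary curves $\gamma_1, \gamma_2$, and since they cobound $\nu$ we have $[\gamma_1] + [\gamma_2] = 0$ in $H_1(S)$. If $\gamma_1$ bounded a disc, then $[\gamma_2] = 0$ as well; but if $\gamma_2$ were isotopic to a boundary component $\delta$ of $S$, then $[\gamma_2] = \pm[\delta] \neq 0$, because $S$ is a compact connected surface with exactly two boundary components, each of whose boundary classes is a nonzero primitive element of $H_1(S)$. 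So for each of your two chains the asserted configuration ``one boundary curve bounds a disc, the other is boundary-parallel'' is self-contradictory. Nor can the two chains be repaired jointly: the boundary classes of $\nu(\alpha_1, \dots, \alpha_{n-1})$ and $\nu(\alpha_1, \dots, \alpha_{n-3})$ are $\pm([\alpha_1]+[\alpha_3]+\cdots+[\alpha_{n-1}])$ and $\pm([\alpha_1]+[\alpha_3]+\cdots+[\alpha_{n-3}])$ respectively, and no choice of signs makes these agree (pair with $[\alpha_{n-2}]$: the first class gives $0$, the second gives $\pm 1$), so the right-hand sides of your two chain relations involve genuinely different boundary twists that cannot be cancelled against one another. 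No inhomogeneous relation results.

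The repair, which is what the paper actually does, is to choose the chain so that the two boundary curves of its neighbourhood, while neither trivial nor boundary-parallel, lie in the $\cgroup(S)$-orbit of the $\alpha_i$, so that their twists are words in the generators of known exponent sum $1$. Concretely, for the chain $\alpha_3, \dots, \alpha_{n-1}$ of odd length $n-3$, the two boundary curves of its regular neighbourhood in $N^n \cup \Delta^1_\circlearrowleft \cup \Delta^1_\circlearrowright$ are $\alpha_1$ and $\beta = T_n \cdots T_3 T_3 \cdots T_n(\alpha_1)$ (this identification is the only pictorial input, Figure~\ref{fig:two-discs}); Lemma~\ref{lem:chain-relation} then yields $(T_{n-1} \cdots T_3)^{n-2} = T_1 \cdot f T_1 f^{-1}$ with $f = T_n \cdots T_3 T_3 \cdots T_n$, whose two sides have exponent sums $(n-3)(n-2)$ and $2$. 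These differ exactly when $n \neq 4$, which is where the hypothesis $n \geq 6$ and the consistency with Proposition~\ref{prop:4} come from.
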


\begin{proof}
  Suppose the curves are arranged as in Figure~\ref{fig:two-discs}.
  Let $\beta = T_{n} \cdots T_3 T_3 \cdots T_{n}(\alpha_1)$.
  Then $\beta \cup \alpha_1$ is the boundary
  of a regular neighbourhood of
  $\alpha_3 \cup \cdots \cup \alpha_{n-1}$,
  see Figure~\ref{fig:two-discs}.
  By the Lemma~\ref{lem:chain-relation},
  the relation $(T_{n-1} \cdots T_3)^{n-2} = T_1 T_\beta$
  follows.
  Because $T_\beta$ is conjugate to $T_1$
  by the formula $T_{f(\alpha_1)} = f T_1 f^{-1}$~\cite[Fact~3.7]{primer}
  for $f = T_n \cdots T_3 T_3 \cdots T_n$,
  the relation in question is inhomogeneous for $n \geq 6$.
  Lemma~\ref{lem:inhomogeneous} leads us to the desired conclusion.

  If the indices of the curves are shifted by one from
  the ones in Figure~\ref{fig:two-discs},
  we instead end up with the relation ${(T_{n-2} \cdots T_2)}^{n-2} = T_n T_{\beta'}$
  where $\beta' = T_{n-1} \cdots T_2 T_2 \cdots T_{n-1} (\alpha_n)$,
  which is also inhomogeneous for $n \geq 6$.

  The statement about supersurfaces follows from the fact
  that the inclusion-induced homomorphisms preserve
  inhomogeneous relations.
\end{proof}

\begin{figure}[htb]
  \centering
  \begin{minipage}{0.33\textwidth}
    \centering
    \includegraphics{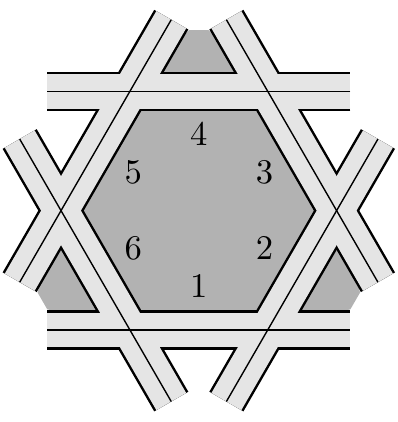}
  \end{minipage}%
  \begin{minipage}{0.33\textwidth}
    \centering
    \includegraphics{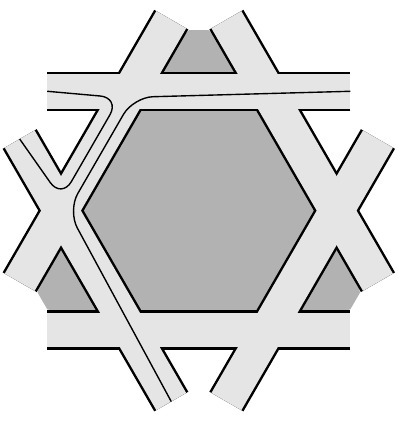}
  \end{minipage}%
  \begin{minipage}{0.33\textwidth}
    \centering
    \includegraphics{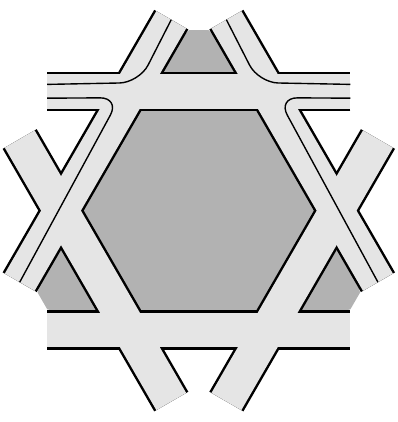}
  \end{minipage}%
  \caption{The curves $\alpha_1, \dots, \alpha_6$,
the curve $T_3 T_4 T_5 T_6 (\alpha_1)$,
and the curve $\beta = T_6 T_5 T_4 T_3^2 T_4 T_5 T_6 (\alpha_1)$,
all in
the surface $N^6 \cup \Delta^1_\circlearrowleft \cup \Delta^1_\circlearrowright$
  }%
  \label{fig:two-discs}
\end{figure}

\subsection{Pathological cases}
The case $n = 4$ becomes strange when too
many discs are glued in, because
some of the curves become isotopic.
The relations from the proof of
Proposition~\ref{prop:not-artin6}
do not reflect this, so we
cover this case separately.

\begin{proposition}\label{prop:4}
  The following statements hold.
  \leavevmode
  \begin{enumerate}[\normalfont(i)]
    \itemsep0em
    \item The group $\cgroup(N^4 \cup \Delta^1_\circlearrowleft \cup \Delta^1_\circlearrowright)$
      is geometrically isomorphic to $\artin(A_3)$.
      \item The group $\cgroup(N^4 \cup 2\Delta^1_\circlearrowleft \cup \Delta^1_\circlearrowright)$
        is geometrically isomorphic to $\artin(A_2)$,
      \item The group $\cgroup(N^4 \cup 2\Delta^1_\circlearrowleft \cup 2\Delta^1_\circlearrowright)$
        is isomorphic to $\specli(2, \mathbb{Z})$, and not geometrically isomorphic to an Artin group.
  \end{enumerate}
\end{proposition}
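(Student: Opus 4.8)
The plan is to realise all three surfaces as the torus with some complementary discs removed, and then to read off $\cgroup(S)$ from which of the four curves become isotopic once those discs are filled back in. I would model $N^4$ by placing $\alpha_1, \alpha_3$ as two parallel horizontal curves and $\alpha_2, \alpha_4$ as two parallel vertical curves on the torus $\mathbb{R}^2/\mathbb{Z}^2$; then $\alpha_1 \cup \cdots \cup \alpha_4$ cuts the torus into four squares $R_1, \dots, R_4$, each an embedded $4$-gon, and $N^4$ is the torus with the four open squares removed. Capping one of the four boundary $4$-gons of $N^4$ corresponds exactly to filling one square $R_j$, and the checkerboard colouring of the $R_j$ matches the two $\circlearrowleft$-discs with one diagonal pair of squares and the two $\circlearrowright$-discs with the other. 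Since $\chi(N^4) = -4$ and each disc raises the Euler characteristic by one, capping two, three, or four squares produces a genus-one surface with two boundary circles, with one boundary circle, and the closed torus, respectively.

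The mechanism I would exploit is that $\alpha_1 \simeq \alpha_3$ precisely when one of the two horizontal annuli between them is fully capped (both of its squares filled), and likewise $\alpha_2 \simeq \alpha_4$ precisely when a full vertical annulus is capped; conversely an uncapped square obstructs the corresponding isotopy, and capping discs introduces no new intersections. For~(iii) all four squares are filled, so $\alpha_1 \simeq \alpha_3 =: a$ and $\alpha_2 \simeq \alpha_4 =: b$, whence $\cgroup$ equals $\langle T_a, T_b\rangle$, the full mapping class group of the torus, which is $\specli(2, \mathbb{Z})$ since two Dehn twists about curves meeting once generate it. For~(ii), capping three squares leaves a single square uncapped; the three remaining ones always contain both a full horizontal and a full vertical annulus, so again $\alpha_1 \simeq \alpha_3$ and $\alpha_2 \simeq \alpha_4$, and $\cgroup = \langle T_a, T_b\rangle$ is the mapping class group of the one-holed torus, geometrically isomorphic to $\artin(A_2)$.

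For~(i) I would check that capping one $\circlearrowleft$- and one $\circlearrowright$-disc always caps two \emph{adjacent} squares, and that two adjacent squares lie in a common annulus; after relabelling the circuit I may assume this annulus is horizontal, so that $\alpha_1 \simeq \alpha_3$ while $\alpha_2 \not\simeq \alpha_4$ (the two candidate annuli between $\alpha_2$ and $\alpha_4$ each still contain an uncapped square, hence neither is embedded). The three surviving curves then form the chain $\alpha_2, \alpha_1, \alpha_4$, and $\cgroup$ is the group generated by the Dehn twists about a chain of three curves. By the Birman--Hilden argument of Section~\ref{ssec:birman-hilden}, applied to the involution of this genus-one surface whose quotient is a disc $\Delta_4$ with four marked points and using Lemma~\ref{lem:a} exactly as in the proofs of Theorem~\ref{thm:regular-nbhd} and Proposition~\ref{prop:an}, this group is geometrically isomorphic to $\artin(A_3)$.

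It remains to see that the group in~(iii) is not geometrically isomorphic to an Artin group, for which I would produce an inhomogeneous relation and apply Lemma~\ref{lem:inhomogeneous}: under $\mcg \cong \specli(2, \mathbb{Z})$ the twists $T_a$ and $T_b$ become elementary matrices whose product has trace $1$ and hence order six, so $(T_1 T_2)^6 = (T_a T_b)^6 = 1$. The hard part of the whole argument is none of the group theory, which is routine given this section's machinery, but the topological bookkeeping of the previous two paragraphs: correctly matching the $\circlearrowleft/\circlearrowright$ labels of the glued discs to the checkerboard colouring, and verifying the ``full annulus'' isotopy criterion together with its converse, so that exactly the right pairs of curves are identified in each of the three cases.
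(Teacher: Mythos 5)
Your proposal is correct and follows essentially the same route as the paper: in each case you identify which opposite pairs $\alpha_1,\alpha_3$ and $\alpha_2,\alpha_4$ become isotopic after capping, reduce to a chain of three curves, a pair of curves meeting once, or the torus, and then quote the standard facts giving $\artin(A_3)$, $\artin(A_2)$, and $\specli(2,\mathbb{Z})$ with the inhomogeneous relation $(T_1T_2)^6=1$. The only difference is presentational: where the paper reads the isotopies off its figures, you substitute an explicit flat-torus checkerboard model (and verify the $\circlearrowleft/\circlearrowright$ matching and the ``full annulus'' criterion), which is a welcome concretization but not a different argument.
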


\begin{proof}
  Suppose the curves and discs are arranged as in Figure~\ref{fig:4}.
  We consider each surface~$S$ separately.

  \begin{enumerate}[(i)]
    \itemsep0em
    \item Let $S = N^4 \cup \Delta_\circlearrowleft^1 \cup \Delta_\circlearrowright^1$ Because $\alpha_2$ and $\alpha_4$ are isotopic
      in $S$,
      we have that $\cgroup(S)$ is
      generated by $T_1, T_2, T_3$.
      Moreover, $S$ is a regular neighbourhood
      of $\alpha_1, \alpha_2, \alpha_3$, so $\cgroup(S)$
      is isomorphic to $\artin(A_3)$~\cite[Section~9.4.1]{primer}.
    \item Let $S = N^4 \cup 2\Delta^1_\circlearrowleft \cup \Delta^1_\circlearrowright$. In addition to $\alpha_2$ being
      isotopic to $\alpha_4$ from the previous case,
      $\alpha_1$ is also isotopic to $\alpha_3$.
      So $\cgroup(S)$ is generated by $T_1, T_2$.
      Moreover, $S$ is a regular neighbourhood of $\alpha_1 \cup \alpha_2$.
      Hence, $\cgroup(S)$ is isomorphic to $\artin(A_2)$~\cite[Theorem~9.2]{primer}.
    \item Let $S = N^4 \cup 2\Delta^1_\circlearrowleft \cup 2\Delta^1_\circlearrowright$.
      Then $S$ is just a torus with meridian $\alpha_1$
      and $\alpha_2$. It is well-known that
      $\mcg(S)$ is generated by $T_1, T_2$,
      and that it is isomorphic to
      $\specli(2, \mathbb{Z})$~\cite[Theorem~2.5]{primer}.
      Moreover, the inhomogeneous relation
      ${(T_1 T_2)}^6 = 1$
      (see~\cite[Section~3.5]{primer}) shows that $\cgroup(S)$ is not geometrically
      isomorphic to an Artin group.
  \end{enumerate}
  If the indices of the curves are instead shifted by one,
  the same arguments hold.
\end{proof}

\begin{figure}[htb]
  \centering
  \begin{minipage}{0.25\textwidth}
    \centering
    \includegraphics{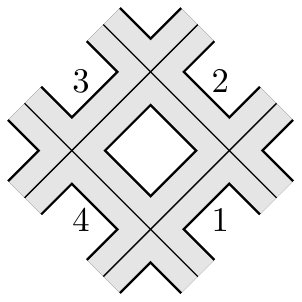}
  \end{minipage}%
  \begin{minipage}{0.25\textwidth}
    \centering
    \includegraphics{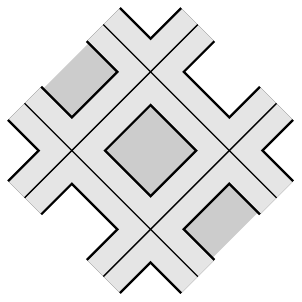}
  \end{minipage}%
  \begin{minipage}{0.25\textwidth}
    \centering
    \includegraphics{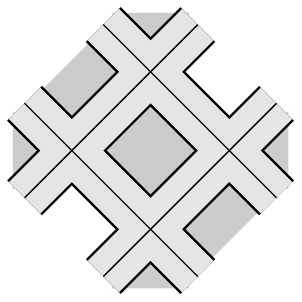}
  \end{minipage}%
  \begin{minipage}{0.25\textwidth}
    \centering
    \includegraphics{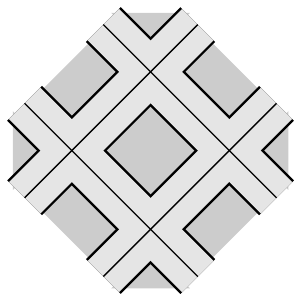}
  \end{minipage}%
  \caption{The surfaces $N^4$, $N^4 \cup \Delta_\circlearrowleft^1 \cup \Delta^1_\circlearrowright$,
  $N^4 \cup 2\Delta^1_\circlearrowleft \cup \Delta^1_\circlearrowright$,
$N^4 \cup 2\Delta^1_\circlearrowleft \cup 2\Delta^1_\circlearrowright$}%
  \label{fig:4}
\end{figure}

\subsection{One last surface}
Up to orientation-reversing homeomorphism,
we have now glued in every possible combination
of discs, except one.
For this final surface $S = N^{n} \cup \Delta^1 \cup \Delta^2$,
the strategy of finding inhomogeneous relations
failed, so the proof that $\cgroup(S)$
is not geometrically isomorphic to an Artin group
turns out to be the most involved argument in this text.
Toward a contradiction, we will assume
that $\cgroup(S)$
is geometrically isomorphic to
an Artin group $\artin(\Gamma)$.
We then exclude all possibilities for the graph $\Gamma$.
Lemma~\ref{lem:reflection-quotients-dn}
below is a statement about Coxeter groups
that helps achieve this
for most graphs.

The \emph{Coxeter group} $\cox(\Gamma)$ is obtained
from $\artin(\Gamma)$ by adding the relations
$s^2 = 1$ for all generators $s$.
If $\cox(\Gamma)$ is finite,
we will say that $\artin(\Gamma)$ is of
\emph{finite type}.
Otherwise, $\artin(\Gamma)$ is of \emph{infinite type}.
The finite Coxeter groups were classified
by Coxeter himself~\cite[Theorem\ddag]{coxeter}.
They are groups of the form $\cox(\Gamma)$,
where $\Gamma = A_n, B_n, D_n$ for arbitrary $n$,
$\Gamma = E_n$ for $n = 6, 7, 8$,
or a few more graphs that do not appear in this text.
See Figure~\ref{fig:dynkin} for a list of the
mentioned graphs.

\begin{lemma}[{\cite[Theorem~0.4 and Table~3]{maxwell}}]\label{lem:reflection-quotients-dn}
  Let $n \geq 3$ with $n \ne 4$.
  If there exists a surjective
  homomorphism
  $\cox(D_n) \to \mathcal{C}(\Gamma)$,
  then $\Gamma$ is either the one-vertex graph $A_1$,
  the graph $A_{n-1}$, or the graph $D_n$.
\end{lemma}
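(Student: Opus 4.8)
The plan is to convert the existence of a surjection into the language of normal subgroups and then to let Maxwell's classification do the heavy lifting. A surjective homomorphism $\cox(D_n) \to \cox(\Gamma)$ realizes $\cox(\Gamma)$ as a quotient $\cox(D_n)/N$ for a normal subgroup $N$ of $\cox(D_n)$, and for $n \geq 3$ the group $\cox(D_n)$ is finite and irreducible. Maxwell's Theorem~0.4 describes the normal subgroups of such a group, and the entry for the $D_n$ family in Table~3 of~\cite{maxwell} records the resulting quotients. The proof then amounts to inspecting this list and retaining only those quotients that are themselves of the form $\cox(\Gamma)$ for a graph $\Gamma$.

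Before consulting the table I would isolate a few elementary constraints that both explain the answer and shorten the casework. Since $\cox(D_n)$ is finite, so is every quotient $\cox(\Gamma)$, forcing $\Gamma$ to be of finite type. Moreover the Coxeter diagram of $D_n$ is connected with all of its edges odd, so $\cox(D_n)^{\mathrm{ab}} \cong \mathbb{Z}/2$; because abelianization carries surjections to surjections, $\cox(\Gamma)^{\mathrm{ab}}$ is a quotient of $\mathbb{Z}/2$, which means the odd subdiagram of $\Gamma$ is connected. This already discards every reducible $\Gamma$, as well as the types $B_m$ and $F_4$, whose odd subdiagrams are disconnected. It also makes the three admissible quotients transparent: $N = 1$ gives $\cox(D_n)$; quotienting by the even sign subgroup $(\mathbb{Z}/2)^{n-1}$ gives the symmetric group $S_n = \cox(A_{n-1})$; and the abelianization gives $\mathbb{Z}/2 = \cox(A_1)$.

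The remaining work, and the main obstacle, is twofold. First there is a bookkeeping issue: Maxwell's classification is phrased in terms of root systems and of normal closures of sets of reflections, so I would have to identify each normal subgroup appearing in the $D_n$ row of Table~3 and compute the isomorphism type of its quotient. Second, I must confirm that none of the surviving finite-type graphs with connected odd subdiagram --- namely $E_6$, $E_7$, $E_8$, the graphs $H_3$, $H_4$, and the two-vertex (dihedral) graphs of odd weight greater than $3$ --- actually arises as a quotient, and that the straight-line type $A_m$ arises only as $A_{n-1}$ (besides the degenerate $A_1$); this is precisely what is read off from Maxwell's table. Finally, the hypothesis $n \neq 4$ enters exactly here: $\cox(D_4)$ is exceptional because of its triality automorphism, so its normal-subgroup lattice is strictly larger and is listed as a separate case in Table~3, admitting extra Coxeter-group quotients. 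Outside $n = 4$ no such additional quotients occur, which yields the clean trichotomy $A_1$, $A_{n-1}$, $D_n$.
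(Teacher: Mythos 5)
Your proposal is correct and matches the paper's treatment: the paper offers no independent proof of this lemma but simply cites Maxwell's Theorem~0.4 and Table~3, and your argument is exactly an unpacking of that citation, with the abelianization and finiteness observations (all edges of $D_n$ odd, hence $\cox(D_n)^{\mathrm{ab}} \cong \mathbb{Z}/2$ and the odd subdiagram of $\Gamma$ must be connected) serving as correct sanity checks that narrow the casework before deferring, as the paper does, to Maxwell's table for the remaining exclusions and for the exceptional behaviour at $n = 4$.
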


Next, we need two results
about the group-theoretic properties of the Artin groups
$\artin(A_{n-1})$ and $\artin(D_n)$.
The first result asserts that
$\artin(A_{n-1})$ is
``geometrically co-Hopfian''.

\begin{lemma}\label{lem:almost-co-hopfian}
  Let $n \geq 2$.
  Every injective homomorphism $\artin(A_{n-1}) \to \artin(A_{n-1})$
  such that the image of a standard generator is conjugate to a standard
  generator is an isomorphism.
\end{lemma}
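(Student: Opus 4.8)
The plan is to identify $\artin(A_{n-1})$ with the braid group $B_n$ on $n$ strands, with standard generators $\sigma_1, \dots, \sigma_{n-1}$, and to reduce the statement to the known classification of injective endomorphisms of braid groups. I will use three standard facts: all of the $\sigma_i$ are conjugate in $B_n$; the abelianization (exponent-sum) homomorphism $e \colon B_n \to \mathbb{Z}$ sends every $\sigma_i$ to $1$, so that any homomorphism $B_n \to \mathbb{Z}$ is an integer multiple of $e$; and for $n \geq 3$ the centre of $B_n$ is infinite cyclic, generated by the full twist $z$ with $e(z) = n(n-1)$.

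First I would carry out an exponent-sum reduction. Given an injective homomorphism $\phi \colon B_n \to B_n$ with $\phi(\sigma_{i_0})$ conjugate to a standard generator, the composite $e \circ \phi$ is a homomorphism $B_n \to \mathbb{Z}$, hence equals $c \cdot e$ for $c = (e \circ \phi)(\sigma_1) \in \mathbb{Z}$. Evaluating on $\sigma_{i_0}$ and using that conjugation preserves exponent sum while a standard generator has exponent sum $1$, I get $c = 1$; that is, $\phi$ preserves exponent sum and each $\phi(\sigma_i)$ has exponent sum $1$.

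The heart of the argument is to invoke the classification of injective endomorphisms of $B_n$ due to Bell and Margalit (building on the computation of $\mathrm{Aut}(B_n)$ by Dyer and Grossman): every injective endomorphism is, up to an inner automorphism and possibly the inversion $\iota \colon \sigma_i \mapsto \sigma_i^{-1}$, a transvection $\tau_k \colon \beta \mapsto \beta\, z^{k\, e(\beta)}$ by a power of the central generator. Writing $\phi = \tau_k \circ \alpha$ with $\alpha \in \mathrm{Aut}(B_n)$, and recalling that inner automorphisms preserve $e$ while $\iota$ negates it, one computes $e \circ \phi = \pm(1 + k\, e(z)) \cdot e = \pm(1 + k\, n(n-1)) \cdot e$. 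Comparing with the conclusion $e \circ \phi = e$ of the previous step forces $\pm(1 + k\, n(n-1)) = 1$. Since $n(n-1) \geq 6$ for $n \geq 3$, the minus sign is impossible and the plus sign gives $k = 0$; thus $\alpha$ is inner and $\phi = \alpha$ is an isomorphism.

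The main obstacle is exactly the surjectivity, which is what the Bell--Margalit classification supplies: the exponent-sum hypothesis alone excludes transvections only once one knows that transvections, together with automorphisms, exhaust the injective endomorphisms. I expect the remaining care to go into the small cases. For $n = 2$ one has $B_2 \cong \mathbb{Z}$ and the statement is immediate from $\sigma_1^m = \sigma_1 \Rightarrow m = 1$. For $n \in \{3, 4\}$, where the cited classification needs separate attention (and where the auxiliary Lemma~\ref{lem:reflection-quotients-dn} already excludes $n = 4$ in its own setting), I would either check that the Bell--Margalit argument still applies or verify the automorphism-versus-transvection dichotomy directly, noting that $n(n-1) \geq 6$ already for $n = 3$, so the arithmetic step in the previous paragraph is unaffected.
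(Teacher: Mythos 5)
Your route is genuinely different from the paper's: you work purely algebraically, reducing to the Dyer--Grossman computation of $\mathrm{Aut}(B_n)$ and the Bell--Margalit classification of injective endomorphisms of $B_n$ as automorphisms composed with transvections, and then ruling out nontrivial transvections by the exponent-sum hypothesis. For $n \geq 4$ this is correct, and the arithmetic ($\pm(1 + k\,n(n-1)) = 1$ forcing $k = 0$ and the positive sign) is fine; the $n = 2$ case is also trivially fine. The paper instead argues geometrically: it realizes $\artin(A_{n-1})$ as a group of Dehn twists, observes that the hypothesis forces each generator to map to a Dehn twist about some curve $\alpha_i'$, uses the braid/commutation dichotomy to see that the $\alpha_i'$ form a chain, and applies the change of coordinates principle to exhibit the map as conjugation by a homeomorphism. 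The geometric argument is uniform in $n$, which is exactly where your proposal breaks down.

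The genuine gap is the case $n = 3$. The Bell--Margalit classification you invoke is proved only for $n \geq 4$, and --- as the paper's own remark following this lemma points out --- their uniform description of injective endomorphisms genuinely fails for $B_3$, because $B_3$ modulo its center is $\mathbb{Z}/2 * \mathbb{Z}/3$, which is not co-Hopfian. So your two proposed fallbacks do not work: the Bell--Margalit argument does not ``still apply'', and the automorphism-versus-transvection dichotomy is false for $B_3$, so it cannot be ``verified directly''. You would need a separate argument for $n = 3$ that uses the conjugacy hypothesis in an essential way (e.g., conjugate so that $\phi(\sigma_1) = \sigma_1$ and classify the elements $g$ with $\sigma_1 g \sigma_1 = g \sigma_1 g$ generating a copy of $B_3$ --- which in effect reproduces the paper's geometric argument). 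This case cannot be discarded: the lemma is applied in Proposition~\ref{prop:special-disc} for odd $n \geq 3$, where the subgroup in question is $\artin(A_2) \cong B_3$, so $n = 3$ is precisely one of the cases the paper needs. Your aside about Lemma~\ref{lem:reflection-quotients-dn} excluding $n = 4$ conflates two different indices and is not relevant here.
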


\begin{proof}
  Think of $\artin(A_{n-1})$ as
  the group $\cgroup(S)$
  generated by $\alpha_1, \dots, \alpha_{n-1}$,
  where $S$ is the surface $N^n \cup 2\Delta^1$,
  see Proposition~\ref{prop:an}.
  Then a homomorphism as in the assumption corresponds to
  an injective homomorphism $\varphi \colon \cgroup(S) \to \cgroup(S)$
  mapping each $T_i$ to a Dehn twist $T_i'$
  about a curve $\alpha_i'$.
  Because $\varphi$ is injective,
  the $\alpha_i'$ are pairwise non-isotopic.
  Moreover, the curves $\alpha_1', \dots, \alpha_{n-1}'$
  form a chain because consecutive curves
  satisfy the braid relation~\cite[Section~3.5.2]{primer}.
  Hence, by the change of coordinates principle~\cite[Section~1.3.3]{primer},
  there exists a homeomorphism $f$ of $S$ such that
  $\alpha_i' = f(\alpha_i)$.
  Thus, $\varphi$ is given by conjugation by $f$,
  and hence is an isomorphism.
\end{proof}

\begin{remark}
  Bell and Margalit in fact describe all
  the injective homomorphisms
  from the $n$-strand braid group $\artin(A_{n-1})$ to itself,
  even the non-geometric ones,
  for $n \geq 4$~\cite[Main Theorem~1]{bell-margalit-co-hopfian}.
  Their uniform description of these homomorphisms
  does not hold for $n = 3$ because $\artin(A_2)$
  modulo its center
  is not co-Hopfian (it is isomorphic to the free product
  $\mathbb{Z}/2 * \mathbb{Z}/3$).
\end{remark}

Our final lemma in this Section asserts
that finite type Artin groups are ``Hopfian''.

\begin{lemma}\label{lem:hopfian}
  Every surjective homomorphism from a finite type Artin group
  onto itself is an isomorphism.
\end{lemma}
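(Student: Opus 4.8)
The plan is to reduce the statement to classical group theory by showing that finite type Artin groups are residually finite and finitely generated. First I would record that every finite type Artin group is finitely generated: by definition its generators are the vertices of $\Gamma$, and $\cox(\Gamma)$ can only be finite if $\Gamma$ has finitely many vertices, so $\Gamma$ is a finite graph.

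The essential input is that every finite type Artin group is \emph{linear}, i.e.\ admits a faithful finite-dimensional representation over a field of characteristic zero. For the braid groups $\artin(A_{n-1})$ this is the faithfulness of the Lawrence--Krammer representation established by Bigelow and Krammer, and the general irreducible finite type case is a theorem of Cohen--Wales and, independently, Digne. The reducible case follows because a finite type Artin group $\artin(\Gamma)$ is the direct product of the irreducible finite type Artin groups associated to the connected components of $\Gamma$, and a finite direct product of linear groups is linear (embed the factors block-diagonally). By Malcev's theorem, every finitely generated linear group is residually finite, so every finite type Artin group is residually finite.

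It then remains to invoke the classical fact, also due to Malcev, that a finitely generated residually finite group $G$ is Hopfian. I would argue as follows. Since $G$ is finitely generated, it has only finitely many subgroups of any fixed finite index $n$. If $\varphi \colon G \to G$ is surjective, then $K \mapsto \varphi^{-1}(K)$ maps index-$n$ subgroups to index-$n$ subgroups (surjectivity of $\varphi$ gives $[G : \varphi^{-1}(K)] = [G : K]$), and it is injective on this finite set because $\varphi(\varphi^{-1}(K)) = K$ when $\varphi$ is onto; hence it is a bijection of a finite set, of some finite order $m$. Thus $\varphi^{-m}(N) = N$ for every subgroup $N$ of index $n$. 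Now suppose $\varphi$ is not injective and pick $1 \ne g \in \ker \varphi$; by residual finiteness choose a finite-index normal subgroup $N$ with $g \notin N$. Since $\varphi(g) = 1$ we get $\varphi^{m}(g) = 1 \in N$, so $g \in \varphi^{-m}(N) = N$, contradicting $g \notin N$. Hence $\varphi$ is injective, and being surjective, it is an isomorphism.

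The only nontrivial ingredient is the linearity of finite type Artin groups, which I am treating as a cited black box; once that is granted, the argument is formal. The step that most needs care is the residual-finiteness-implies-Hopfian deduction, where surjectivity of $\varphi$ must be used both to preserve the index of the preimages $\varphi^{-1}(K)$ and to make $K \mapsto \varphi^{-1}(K)$ a bijection of the finite set of index-$n$ subgroups.
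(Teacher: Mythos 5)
Your proof is correct and follows essentially the same route as the paper: both reduce the statement to the classical fact that a finitely generated residually finite group is Hopfian. The only difference is that you derive residual finiteness from linearity (Bigelow--Krammer, Cohen--Wales, Digne) plus Malcev, and you write out the Hopfian argument in full, whereas the paper simply cites residual finiteness of finite type Artin groups and the Hopfian property directly.
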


\begin{proof}
  Because finite type Artin groups
  are residually finite~\cite[Corollary~1.2]{blasco-garcia}
  they are also Hopfian~\cite[Theorem~IV.4.10]{lyndon-schupp}.
\end{proof}

\begin{proposition}\label{prop:special-disc}
  For odd $n \geq 3$,
  the group $\cgroup(N^{n} \cup \Delta^1 \cup \Delta^2)$
  is not geometrically isomorphic to an Artin group.
\end{proposition}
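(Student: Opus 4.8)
The plan is to argue by contradiction. Suppose $\cgroup(S)$ is geometrically isomorphic to $\artin(\Gamma)$, and let $\Phi\colon\artin(\Gamma)\to\cgroup(S)$ be the geometric isomorphism, so that the standard generators of $\artin(\Gamma)$ are sent to Dehn twists. Two surjections onto $\cgroup(S)$ will drive the argument, both induced by capping a boundary component and both sending $T_i$ to $T_i$: capping the $2n$-gon gives $\psi\colon\artin(D_n)=\cgroup(N^n\cup\Delta^1)\twoheadrightarrow\cgroup(S)$ by Proposition~\ref{prop:dn}, and capping an $n$-gon gives $\artin(\widetilde A_{n-1})=\cgroup(N^n\cup\Delta^2)\twoheadrightarrow\cgroup(S)$ by Proposition~\ref{prop:antilde}. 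First I would reduce to $\Gamma$ connected. Abelianising $\psi$ exhibits $H_1(\artin(\Gamma))$ as a quotient of $H_1(\artin(D_n))=\mathbb{Z}$. Since the weights of $\Gamma$ lie in $\{2,3,\infty\}$ (as recalled in the introduction), $H_1(\artin(\Gamma))$ is free abelian of rank equal to the number of connected components of the subgraph of weight-$3$ edges; being a nonzero quotient of $\mathbb{Z}$, it must be $\mathbb{Z}$, so that subgraph is spanning and connected. In particular $\Gamma$ is connected and all of its standard generators are conjugate.

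The heart of the matter is to promote $\psi$ to a surjection $\cox(D_n)\twoheadrightarrow\cox(\Gamma)$. Composing $\psi$ with $\cgroup(S)\xrightarrow{\Phi^{-1}}\artin(\Gamma)\to\cox(\Gamma)$ gives a surjection $\artin(D_n)\to\cox(\Gamma)$, and this factors through $\cox(D_n)$ precisely when each $T_i$ maps to an involution in $\cox(\Gamma)$. Here one uses that consecutive curves in the circuit satisfy braid relations, so all the $T_i$ are mutually conjugate in $\cgroup(S)$; combined with the fact that all standard generators of $\artin(\Gamma)$ are conjugate and have the same image as the $T_i$ in $H_1=\mathbb{Z}$, one argues that each $T_i$ is conjugate to a standard generator and hence maps to a reflection in $\cox(\Gamma)$. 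As $n$ is odd we have $n\neq4$, so Lemma~\ref{lem:reflection-quotients-dn} applies and forces $\Gamma$ to be $A_1$, $A_{n-1}$, or $D_n$.

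It remains to rule out these three graphs. The case $\Gamma=A_1$ is immediate, since $\cgroup(S)$ is non-abelian. For $\Gamma=D_n$, the map $\psi$ becomes a surjective endomorphism of the finite type group $\artin(D_n)$, hence an isomorphism by Lemma~\ref{lem:hopfian}; thus capping the $2n$-gon would leave $\cgroup$ unchanged. This is false: one shows, via the chain relation (Lemma~\ref{lem:chain-relation}), that the Dehn twist about $\partial\Delta^2$ is a nonzero power of the generator of the infinite cyclic centre of $\artin(D_n)$, and it lies in $\ker\psi$ because $\partial\Delta^2$ bounds a disc in $S$. Hence $\ker\psi\neq1$, a contradiction.

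For $\Gamma=A_{n-1}$, I would restrict $\psi$ to the parabolic subgroup $\artin(A_{n-1})\subset\artin(D_n)$ on the generators $s_1,\dots,s_{n-1}$. Its image is $H=\langle T_1,\dots,T_{n-1}\rangle$, which is generated by the chain $\alpha_1,\dots,\alpha_{n-1}$ and so is itself isomorphic to $\artin(A_{n-1})$ (making the restriction injective), and each $T_i$ is conjugate to a standard generator of $\cgroup(S)\cong\artin(A_{n-1})$ by the previous paragraph. By Lemma~\ref{lem:almost-co-hopfian} the inclusion $H\hookrightarrow\cgroup(S)$ is then an isomorphism, so $T_n\in H$. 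But $H$ is supported on a regular neighbourhood $Y$ of $\alpha_1\cup\dots\cup\alpha_{n-1}$ and hence fixes $\partial Y$ up to isotopy, whereas $T_n$ does not, as $\alpha_n$ meets $\alpha_1\subset Y$ and so crosses $\partial Y$; this contradiction finishes the proof. I expect the two identification steps to be the main obstacles: showing that the $T_i$ become involutions in $\cox(\Gamma)$, which is what lets Lemma~\ref{lem:reflection-quotients-dn} engage, and pinning down the central boundary twist destroyed by $\Delta^2$, which is what makes the rigidity of Lemma~\ref{lem:hopfian} bite.
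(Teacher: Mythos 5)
Your skeleton matches the paper's: use the geometric surjection $\artin(D_n)=\cgroup(N^n\cup\Delta^1)\twoheadrightarrow\cgroup(S)$, descend to Coxeter groups, invoke Lemma~\ref{lem:reflection-quotients-dn} to reduce to $\Gamma\in\{A_1,A_{n-1},D_n\}$, and eliminate the three cases using Lemmas~\ref{lem:almost-co-hopfian} and~\ref{lem:hopfian}. Your $D_n$ case is handled differently but in the same spirit: one needs a nontrivial element of the kernel of $\psi$, and where the paper takes the commutator $T_\beta T_nT_\beta^{-1}T_n^{-1}$, with $\beta$ the boundary of a regular neighbourhood of the chain $\alpha_1,\dots,\alpha_{n-1}$ (nontrivial in $\cgroup(N^n\cup\Delta^1)$ because $\beta$ and $\alpha_n$ intersect twice there and hence generate a free group, trivial in $\cgroup(S)$ because the two intersection points cancel through $\Delta^2$), you take $T_{\partial\Delta^2}$. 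That works only if you actually verify that $T_{\partial\Delta^2}$ lies in $\cgroup(N^n\cup\Delta^1)$ and is nontrivial there; your appeal to the chain relation and the centre of $\artin(D_n)$ is plausible but is precisely the computation that would need to be done.

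The genuine failure is the last step of your $A_{n-1}$ case. Having concluded $T_n\in H=\langle T_1,\dots,T_{n-1}\rangle$, you derive a contradiction from the claim that $T_n$ moves $\partial Y$, ``as $\alpha_n$ meets $\alpha_1\subset Y$ and so crosses $\partial Y$''. Crossing $\partial Y$ in some position does not give positive geometric intersection, and in this particular surface the minimal intersection really is zero: since $n-1$ is even, $\partial Y$ is a single curve $\beta$, and after gluing in $\Delta^2$ its two intersection points with $\alpha_n$ cancel --- this is exactly the phenomenon the paper exploits to kill the $D_n$ case. So $T_n$ fixes $\partial Y$ up to isotopy (indeed $\alpha_n$ is isotopic into $Y$), and your contradiction evaporates. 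The paper instead shows directly that $H$ is a \emph{strict} subgroup: it extends an involution of $S$ that preserves $\alpha_1,\dots,\alpha_{n-1}$ but not $\alpha_n$, which simultaneously yields $H\cong\artin(A_{n-1})$ (a fact you assert without proof --- Dehn twists about a chain a priori generate only a quotient of $\artin(A_{n-1})$) and $T_n\notin H$. Finally, your inference that each $T_i$ is conjugate to a standard generator because the images in $H_1\cong\mathbb{Z}$ agree is not valid (exponent sum does not detect conjugacy in braid groups), so both the descent to $\cox(\Gamma)$ and the hypothesis of Lemma~\ref{lem:almost-co-hopfian} still require an argument at that point.
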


\begin{proof}
  Write $S = N^n \cup \Delta^1 \cup \Delta^2$.
  Suppose toward a contradiction
  that $\cgroup(S)$ is geometrically isomorphic to $\artin(\Gamma)$
  for a graph $\Gamma$.
  Recall that by Proposition~\ref{prop:dn}, the group
  $\cgroup(N^n \cup \Delta^1)$ is geometrically isomorphic
  to $\artin(D_n)$.
  Thus, the inclusion-induced homomorphism
  $\cgroup(N^n \cup \Delta^1) \to \cgroup(S)$
  gives rise to a surjective
  homomorphism
  $\cox(D_n) \to \mathcal{C}(\Gamma)$
  (note that we use here that the isomorphism $\artin(D_n)
  \to \cgroup(N^n \cup \Delta^1)$ is geometric).
  From Lemma~\ref{lem:reflection-quotients-dn}
  it follows that $\Gamma$ is either $A_1$, $A_{n-1}$, or $D_n$.
  We will now rule out each of those graphs.

  We first argue that $\cgroup(S)$ contains a strict
  subgroup isomorphic to $\artin(A_{n-1})$.
  Consider the
  plastic view of $N^n$
  as on the left of Figure~\ref{fig:plastic-odd}.
  Capping of the top and right boundary components
  with discs yields the surface $S$ on the right.
  Now rotating about the $x$-axis by an angle of $\pi$
  yields an involution $\iota$ of $S$.
  Suppose the curves $\alpha_1, \dots, \alpha_n$
  are numbered such that $\alpha_n$ is the right-most cuve.
  Then $\iota$ preserves $\alpha_1, \dots, \alpha_{n-1}$,
  but not $\alpha_n$.
  Thus, the strict subgroup of $\cgroup(S)$ generated by $T_1, \dots, T_{n-1}$
  is isomorphic to $\artin(A_{n-1})$.
  This excludes the case $\Gamma = A_1$ immediately,
  and an application of Lemma~\ref{lem:almost-co-hopfian}
  excludes the case $\Gamma = A_{n-1}$.

  Next, we show that the inclusion-induced homomorphism
  $\cgroup(N^n \cup \Delta) \to \cgroup(S)$ is not injective.
  To this end, consider the boundary curve $\beta$
  of the chain $\alpha_1, \dots, \alpha_{n-1}$
  in $N^n \cup \Delta$.
  Then $\beta$ intersects $\alpha_n$ twice.
  But the image of $\beta$ under the inclusion map
  $N^n \cup \Delta \to S$ does not intersect
  the image of $\alpha_n$.
  Hence, the commutator $T_\beta T_n T_{\beta}^{-1} T_n^{-1}$
  is a non-trivial element of the kernel.
  By Lemma~\ref{lem:hopfian}, $\Gamma$ cannot be
  $D_n$, excluding all possibilities for $\Gamma$.
\end{proof}

\begin{figure}[htb]
  \centering
  \begin{minipage}{0.5\textwidth}
    \centering
    \includegraphics{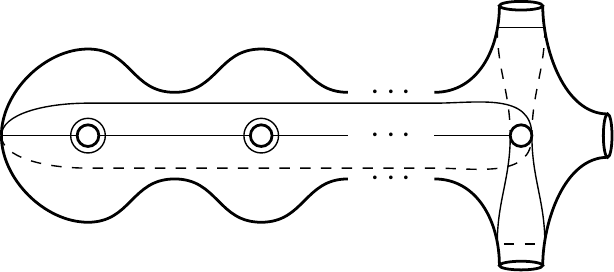}
  \end{minipage}%
  \begin{minipage}{0.5\textwidth}
    \centering
    \includegraphics{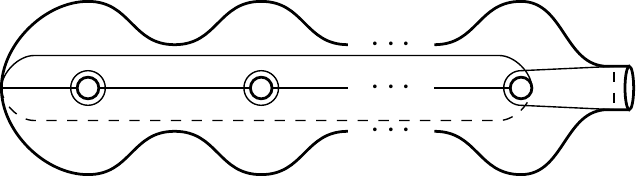}
  \end{minipage}%
  \caption{Another view of the surfaces $N^n$ and $N^n \cup \Delta^1 \cup \Delta^2$ for odd $n$.}%
  \label{fig:plastic-odd}
\end{figure}

\section{Proof of the main theorem}\label{sec:punctured-discs-and-annuli}
This short final section is about gluing in
punctured discs and annuli to the surfaces from Table~\ref{tab:summary}
and collecting the relevant results in this text to prove
Theorem~\ref{thm:cycle-relation}.

\begin{proposition}\label{prop:punctured-discs-and-annuli}
  Let $S$ be a surface containing
  a circuit $\alpha_1, \dots, \alpha_{n}$.
  Suppose that
  $\cgroup(S)$ is geometrically isomorphic
  to $\artin(\widetilde A_{n-1})$.
  Let $\Delta_1$ be a once-marked disc
  whose interior is disjoint from the interior of $S$,
  with $\partial \Delta_1 \subset \partial S$.
  Then the inclusion-induced
  homomorphism
  $\cgroup(S) \to \cgroup(S \cup \Delta_1)$
  is an isomorphism.
  Similarly,
  if $Z$ is an annulus
  whose interior is disjoint from the interior of $S$,
  with $\partial Z \subset \partial S$,
  then the inclusion-induced homomorphism
  $\cgroup(S) \to \cgroup(S \cup Z)$ is an isomorphism.
\end{proposition}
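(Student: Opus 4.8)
The plan is to show that the inclusion-induced homomorphism is an isomorphism by verifying surjectivity directly and reducing injectivity to the triviality of the center of $\artin(\widetilde A_{n-1})$. Surjectivity is immediate in both cases: the inclusion $S \hookrightarrow S \cup \Delta_1$ (respectively $S \hookrightarrow S \cup Z$) carries the Dehn twist $T_i$ about $\alpha_i$ in $S$ to the Dehn twist about the same curve in the larger surface, and by definition these twists generate $\cgroup(S \cup \Delta_1)$ (respectively $\cgroup(S \cup Z)$).

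For injectivity, I would first observe that $\cgroup(S \cup \Delta_1)$ is precisely the image of $\cgroup(S)$ under the inclusion homomorphism $\mcg(S) \to \mcg(S \cup \Delta_1)$, so that the kernel of $\cgroup(S) \to \cgroup(S \cup \Delta_1)$ equals $\cgroup(S) \cap K$, where $K = \ker\bigl(\mcg(S) \to \mcg(S \cup \Delta_1)\bigr)$. By Lemma~\ref{lem:inclusion-homo}(i) this $K$ is generated by the Dehn twist about the capped boundary curve and lies in the center of $\mcg(S)$; in the annular case, Lemma~\ref{lem:inclusion-homo}(ii) yields the same conclusion with $K$ generated by $T_\alpha T_\beta^{-1}$. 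Since $K$ is central in $\mcg(S)$, every element of $\cgroup(S) \cap K$ commutes with all of $\cgroup(S)$, whence $\cgroup(S) \cap K \subseteq Z(\cgroup(S))$. As the geometric isomorphism is in particular a group isomorphism $\cgroup(S) \cong \artin(\widetilde A_{n-1})$, it transports this center to $Z(\artin(\widetilde A_{n-1}))$.

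The crux, and the step I expect to be the main obstacle, is therefore the purely algebraic fact that the irreducible affine Artin group $\artin(\widetilde A_{n-1})$ has trivial center for $n \geq 3$ (recall that a circuit has $n \geq 3$). I would invoke this as known input from the theory of Euclidean Artin groups. Granting it, $\cgroup(S) \cap K$ is trivial, so the homomorphism is injective and hence an isomorphism, and the annular case is identical apart from the generator of the central kernel. It is worth noting why an affine-type input seems unavoidable here: one could try to establish the triviality internally via the embedding $\artin(\widetilde A_{n-1}) \hookrightarrow \artin(B_{n})$ of Lemma~\ref{lem:atilde}, using that the infinite cyclic center of the finite-type group $\artin(B_{n})$ has nonzero image under the map $t \mapsto 1$ and so meets the subgroup trivially; but since \emph{a priori} the subgroup could possess central elements of its own, this only controls $Z(\artin(B_n)) \cap \artin(\widetilde A_{n-1})$ rather than $Z(\artin(\widetilde A_{n-1}))$ itself.
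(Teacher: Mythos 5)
Your proposal is correct and follows essentially the same route as the paper: surjectivity is immediate, and injectivity reduces via Lemma~\ref{lem:inclusion-homo} (the kernel being central in $\mcg(S)$) to the triviality of the center of $\artin(\widetilde A_{n-1})$ for $n \geq 3$, which the paper cites from Charney--Peifer. Your closing remark correctly identifies why the embedding into $\artin(B_n)$ cannot substitute for that external input.
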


\begin{proof}
  Charney and Peifer show that
  for $n \geq 3$, the center of $\artin(\widetilde A_{n-1}$)
  is trivial~\cite[Proposition~1.3]{charney-peifer}.
  It now follows from Lemma~\ref{lem:inclusion-homo}
  that the inclusion-induced homomorphisms
  $\cgroup(S) \to \cgroup(S \cup \Delta_1)$
  and
  $\cgroup(S) \to \cgroup(S \cup Z)$
  are injective and hence isomorphisms.
\end{proof}

\begin{proof}[Proof of Theorem~\ref{thm:cycle-relation}]
  We prove the right-to-left implication,
  as the left-to-right implication follows from
  Labruère's result~\cite[Proposition~2]{labruere}.
  Contrapositively, suppose that
  the circuit $\alpha_1, \dots, \alpha_n$ does not bound
  an embedded closed disc.
  In other words, the complement of a regular neighbourhood
  of $\alpha_1, \dots, \alpha_n$ in $S$
  is a union of surfaces that are not embedded discs.
  Let $S'$ be the union of such a neighbourhood
  with all the non-embedded discs in its complement.
  Theorem~\ref{thm:regular-nbhd} and
  Proposition~\ref{prop:antilde} imply that $\cgroup(S')$
  is geometrically isomorphic to $\artin(\widetilde{A}_{n-1})$.
  The complement of $S'$ in $S$ is a union of
  surfaces that are not discs, so
  by Proposition~\ref{prop:punctured-discs-and-annuli}
  and Lemma~\ref{lem:inclusion-homo},
  it follows that also $\cgroup(S)$ is geometrically isomorphic
  to $\artin(\widetilde{A}_{n-1})$.
  But the cycle relation does not hold in this group.
  Indeed, as remarked above, the center of $\artin(\widetilde{A}_{n-1})$
  is trivial, whereas the quotient of $\artin(\widetilde{A}_{n-1})$
  by the normal subgroup generated by the cycle relation
  is isomorphic to $\artin(D_n)$ (see Lemma~\ref{lem:labruere} and Proposition~\ref{prop:dn}), which has infinite
  cyclic center~\cite[Satz~7.2]{brieskorn-saito}.
\end{proof}

\section*{Acknowledgements}
The author wishes to thank the anonymous referee for their thorough review
and the many small improvements that were suggested.

\bibliographystyle{amsalpha}
\bibliography{biblio}

\end{document}